\newtheorem{assumption}{Assumption}
\def\dataavailability{\par\addvspace{17pt}\small\rmfamily
\trivlist\if!\ackname!\item[]\else
\item[\hskip\labelsep
{\bfseries Data Availability}]\fi}
\newenvironment{dataavailabilitys}{\begin{dataavailability}}
{\end{dataavailability}}
\titlerunning{A semi-implicit stochastic multiscale method for radiative heat transfer problem}
\begin{document}

\title{A semi-implicit stochastic multiscale method for radiative heat transfer problem in composite materials
}


\author{Shan Zhang         \and
        Yajun Wang         \and
        Xiaofei Guan
}


\institute{Shan Zhang \at
              College of Mathematics and Physics, Shanghai University of Electric Power, Shanghai, China \\
              \email{zhangs@shiep.edu.cn}           
           \and
           Yajun Wang, Xiaofei Guan  \at
           School of Mathematical Sciences, Tongji University, Shanghai, China \\
           \email{1910733@tongji.edu.cn, guanxf@tongji.edu.cn} 
}

\date{Received: date / Accepted: date}

\maketitle

\begin{abstract}
  In this paper, we propose and analyze a new semi-implicit stochastic multiscale method for the radiative heat transfer problem with additive noise fluctuation in composite materials. In the proposed method, the strong nonlinearity term induced by heat radiation is first approximated, by a semi-implicit predictor-corrected numerical scheme, for each fixed time step, resulting in a spatially random multiscale heat transfer equation. Then, the infinite-dimensional stochastic processes are modeled and truncated using a complete orthogonal system, facilitating the reduction of the model's dimensionality in the random space. The resulting low-rank random multiscale heat transfer equation is approximated and computed by using efficient spatial basis functions based multiscale method. The main advantage of the proposed method is that it separates the computational difficulty caused by the spatial multiscale properties, the high-dimensional randomness and the strong nonlinearity of the solution, so they can be overcome separately using different strategies. The convergence analysis is carried out, and the optimal rate of convergence is also obtained for the proposed semi-implicit stochastic multiscale method. Numerical experiments on several test problems for composite materials with various microstructures are also presented to gauge the efficiency and accuracy of the proposed semi-implicit stochastic multiscale method.
\keywords{Radiative heat transfer \and Semi-implicit scheme \and  Model reduction\and Error estimates \and Additive noise}
\subclass{65N12 \and 65N15 \and 80M10}
\end{abstract}

\section{Introduction} 
\label{section:intro} \qquad 
Thermal radiation is important in many engineering applications, particularly at high temperatures and in the boilers and furnaces used by the petrochemical industry for hydrocarbon distillation and cracking \cite{EB:IJHMT:1973,HMDS:HP:2020}. In addition, thermal radiation is also used to monitor and control the temperature of these processes through infrared cameras and pyrometers. Therefore, the research on thermal radiation has far-reaching significance for natural science and engineering practice. Composite materials have pivotal applications in thermal protection systems due to their excellent thermal and mechanical properties, and the accurate prediction of the radiation heat transfer performance of composite materials is a key issue. The radiative heat transfer model of composites usually involves multiscale information\cite{HMDS:HP:2020,HZ:JCP:2011,SAF:JSC:2008} and uncertain features\cite{LK:JSC:2015,LQ:JNU:2019,WF:JNS:2020}. Due to the incomplete recognition of composite materials, it is necessary to develop a stochastic multiscale model. It can not only capture all random features with highly efficient simulations but also greatly alleviate the computational cost and improve computational efficiency.

The radiative heat transfer problem with rapidly oscillating coefficients has many applications in energy transport and conversion in composite materials or porous media\cite{PB:MMS:2003,EB:IJHMT:1973,Y:TEJC:2003}. Standard numerical approximations lead to extremely complex calculations, so some model reductions for various complex physical systems are necessary. Enormous progress has been made to solve multiscale problems using various technologies, including numerical homogenization approaches\cite{HKP:MMS:2020,MHCL:CCP:2020}, multiscale finite element method (MsFEM)\cite{HW:JCP:1997,HWC:MC:1999}, variational multiscale method (VMM)\cite{JSDO:SIAMJUQ:2014}, heterogeneous multiscale method (HMM) \cite{OV:MMS:2018}, generalized multiscale finite element method (GMsFEM)\cite{EGH:JCP:2013,EGLP:IJMCE:2014}, and constraint energy minimizing generalized multiscale finite element method (CEM-GMsFEM)\cite{CEL:CMAME:2018,CP:MMS:2019,FCM:JCP:2020}. In this paper, we present a multiscale model reduction technique using CEM-GMsFEM. The main components of CEM-GMsFEM are the construction of local auxiliary functions for each coarse grid (via GMsFEM) and the solution of a constraint energy minimizing (CEM) problem for each oversampled domain. The CEM-GMsFEM is spatially declining, which makes the convergence related to the size of the coarse grid, so we can construct the basis functions in a very small local region, which greatly reduces the computational burden. Some multiscale methods have been successfully applied to solve radiative heat transfer problems\cite{FJC:JCP:1971,HC:MMS:2014,HCY:AMC:2015,YCWZ:NMPDE:2016}. Few studies, however, have concentrated on the uncertainty of random microstructure or random input in radiative heat transfer problems.

The main purpose of this paper is to develop a highly efficient semi-implicit stochastic multiscale method for the time-dependent nonlinear radiative heat transfer problems driven by additive noise in composite materials. The advantages of the method are verified both theoretically and numerically. We propose and analyze a semi-implicit predictor-corrected numerical scheme resulting in a random multiscale heat transfer equation that possesses two key ingredients. First, we employ a semi-implicit time discretization technique\cite{LS:IJNAM:2013,MELD:JCP:2008,YMHYH:CMAME:2019}, which treats the nonlinear heat source term explicitly in order to avoid solving the nonlinear equations at each fixed time step and the other terms implicitly. Secondly, the semi-implicit technique is combined with the predictor-corrector method \cite{CTV:AMC:2012,DJ:JSIAM:1963,NN:AMC:2007} of the strong nonlinearity term induced by heat radiation for each fixed time step. Then the resulting random multiscale heat transfer equation is approximated and computed by a discrete representation of infinite-dimensional noise\cite{DJLQ:JCP:2018,ZGJ:JCAM:2021,Z:HK:2000} and the efficient free energy functions based on the multiscale method (CEM-GMsFEM), which only requires solving a determination heat transfer equation with a contrast-free multiscale formulation. The Monte Carlo technique is used for sampling the probability space. Moreover, the convergence of CEM-GMsFEM can be characterized by the coarse grid size and is independent of the scale length and multiscale features of the composite materials when the oversampling domain is carefully chosen. Numerical simulations are shown to support the proposed method. We present numerical results for various heterogeneous permeability fields, such as periodic porous media and permeability fields containing overlapping inclusions and non-overlapping inclusions with high conductivity values, respectively. The numerical results show that our proposed method has good generality and is not limited to periodic microstructure.

The remainder of this paper is organized as follows. In Section 2, we present the governing equations of stochastic radiative heat transfer problems. The details of the proposed method, including the discrete representation of noise and the construction of multiscale basis functions, will be introduced. The convergence of the method will be analyzed in Section 3. A few numerical results are presented in Section 4, which validate the theoretical results of the proposed method. Finally, some conclusions and comments are given. 

\section{Multiscale analysis of stochastic radiative heat transfer problems}

\qquad In this section, model problems are given in subsection \ref{modelproblems}. The representations of noises are provided in subsection \ref{noise:discretization}. Based on CEM-GMsFEM \cite{CEL:CMAME:2018}, we  present the construction for multiscale basis functions in subsection \ref{cembasis}.

\subsection{Problem setup}
\label{modelproblems}
\qquad Let $D$ be a bounded convex polygonal domain or a specified bounded smooth domain in $\mathbb{R}^{d}\ (d=2,3)$. $(\Omega,\mathcal{F},\mathbb{P})$ is a complete probability space, and $\Omega$ is the sample space, $\mathcal{F}$ is the subspace of $\sigma$-algebra in $\Omega$, $\mathbb{P}:\mathcal{F}\rightarrow [0,1]$ is the probability measure. The following governing equations of stochastic radiative heat transfer problems are considered
\begin{equation}\label{u} \small
  \left\{
 \begin{aligned}
 &u_{t}(x,t)- \nabla \cdot \big(\kappa(x) \nabla u(x,t)\big)+\sigma(x)\left(u(x,t)\right)^{4}=f(x,t)+\dot{W}(t),\quad (x,t)\in D\times (0,T], \\
 &u(x,t)=0,\quad (x,t)\in\partial D\times (0,T],\\
 &u(x,0)=u_{0}(x), \quad x \in D,
 \end{aligned}
   \right.
\end{equation}
where $u(x, t)$ represents temperature, which is a random process. $f(x,t)$ and $u_{0}(x)$ are the heat source and the initial condition, respectively. $T> 0$ is a fixed time. $\kappa(x)$ is the thermal conductivity, and $\sigma(x)$ represents the Stefan--Boltzmann coefficient. $W(t)$ represents the infinite dimensional Brownian motion on the given probability space $(\Omega,\mathcal{F},\mathbb{P})$, and $\dot{W}(t)=\frac{dW(t)}{dt}$ denotes the noise. 
 
Throughout the rest of this paper, the $L^2$-norm and the energy norm are defined by 
\begin{equation}
\|u\|_{L^{2}(D)}^{2}:=\int_{D}|u|^{2}dx,
\end{equation}
and 
\begin{equation}
  \|u\|_{a}^{2}:=\int_{D}\kappa(x) |\nabla u|^{2}dx.
\end{equation}

From the results of \cite{HC:MMS:2014} and \cite{ZK:S:2017}, we make the following assumptions.
\begin{assumption}\label{a1}
Let thermal conductivity $\kappa(x)\in L^{\infty}(\mathbb{R}^{2})$. Assume that $\kappa(x)$ is uniformly positive and bound in $D$,
i.e., there exists positive constants $\kappa_0$ and $\kappa_1$ such that
\begin{equation}
\kappa_0\xi^{T}\xi\leq \xi^{T}\kappa(x)\xi\leq \kappa_1\xi^{T}\xi,
\end{equation}
for all $\xi\in \mathbb{R}^{2}$ and $x\in D$. In particular, $\kappa(x)$ is assumed to be highly oscillatory and of high contrast, i.e., almost every $x\in D$ satisfies $\frac{\kappa_1}{\kappa_0}\gg 1$.
\end{assumption}

\begin{assumption}\label{a2}
Assume that Stefan--Boltzmann coefficients $\sigma(x)\in L^{\infty}(\mathbb{R}^{2})$ satisfies $0<\sigma_0\leq\sigma(x)\leq \sigma_1<\infty$, where $\sigma_0$ and $\sigma_1$ are some positive constants with the large ratio $\frac{\sigma_1}{\sigma_0}$. In other words, the Stefan--Boltzmann coefficients $\sigma(x)$ are also highly heterogeneous with high-contrastness $\frac{\sigma_1}{\sigma_0}$.
\end{assumption}
 
\begin{assumption}\label{a3}
  Define the function $g(u)=\sigma(x)\left(u(x,t)\right)^{4}$ to represent Stefan-Boltzmann law, which states that emitted radiations are proportional to the 4th power of the temperature. It follows
  \begin{itemize}
    \item[(1)] There exists a constant $L<C_{p}$ such that 
    \begin{equation}
    |g(s)-g(t)|(s-t)\geq -L|s-t|^{2},\quad \forall s,t\in \mathbb{R};
    \end{equation}
    \item[(2)] There exists constants $M>0$ and $R>0$ such that
    \begin{equation}
     |g(s)-g(t)|\leq M+R|s-t|,\quad \forall s,t\in \mathbb{R};
    \end{equation}
  \end{itemize}
  Here $C_{p}$ is the constant in the Poincare inequality:
  \begin{equation}\label{assumption:cp}
   \|\nabla v\|_{L^{2}(D)}^{2}\geq C_{p}\|v\|_{L^{2}(D)}^{2},~~ v\in H_0^1(D).
  \end{equation}
  \end{assumption}

\subsection{The representation of additive noise}\label{noise:discretization}

\quad \textbf{(One-dimensional Brownian motion)} 
A one dimensional continuous time stochastic process $W(t)(t\in[0,T])$ is called a standard Brownian motion if
\begin{itemize}
  \item[(1)] $W(t)$ is almost surely continuous in $t$;
  \item[(2)] $W(0)=0$ (with probability 1);
  \item[(3)] $W(t)-W(s)$ obeys the normal distribution with mean zero and variance $t-s$;
  \item[(4)] For $0 \leq  s<t<u<v \leq T$ the increments $W(t)-W(s)$ and $W(v)-W(u)$ are independent.
\end{itemize}
It can be readily shown that $W(t)$ is Gaussian process. Then the formally first-order derivative of $W(t)$ in time $\dot{W}(t)=\frac{dW(t)}{dt}$ \textcolor{black}{is referred to as white  noise.}
\begin{figure}[htpb]
  \centering
\includegraphics[width=2.2in, height=1.8in]{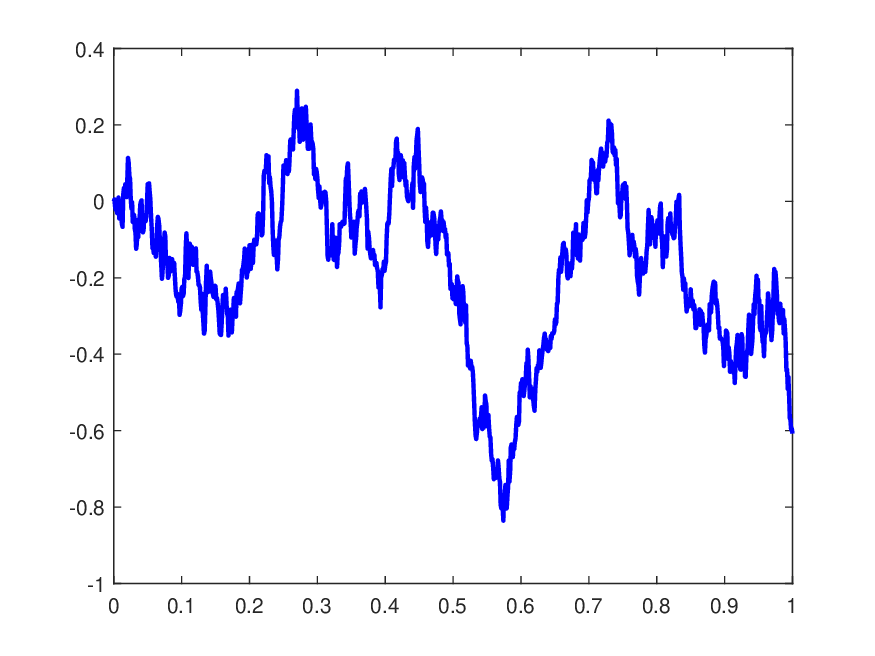}
\caption{An illustration of a sample path of Brownian motion using cumulative summation of increments}\label{white-noise}
\end{figure}
It can be seen from Fig. \ref{white-noise} that the regularity of Brownian motion is very poor. \textcolor{black}{Thus, the solution of stochastic radiative heat transfer problems driven by the white noise have poor regularity.}

\textbf{(Noises in orthogonal basis forms)} 
Brownian motion and white noise can also be defined in terms of orthogonal expansions. Suppose that $\{ m_{k}(t)\}_{k\geq 1}$ is a complete orthonormal system (CONS) in $L_{2}([0, T])$. The white noise is defined by
\begin{equation}\label{1noise}
  \dot{W}(t)=\sum_{k=1}^{\infty}\eta_{k}m_{k}(t), t \in [0, T],
  \end{equation}
  with the truncated orthogonal basis forms
  \begin{equation}\label{app1noise}
  \dot{W}_{n}(t)=\sum_{k=1}^{n}\eta_{k}m_{k}(t), t \in [0, T].
  \end{equation}
As can be seen from Fig. \ref{white-noise1}, the regularity of white noise in orthogonal basis forms has been improved, but there is still room for improvement.

  \begin{figure}[htpb]
    \subfigure[$n=4$ ]{
      \begin{minipage}[t]{0.22\linewidth}
      \centering
      \includegraphics[width=1.5in, height=1.5in]{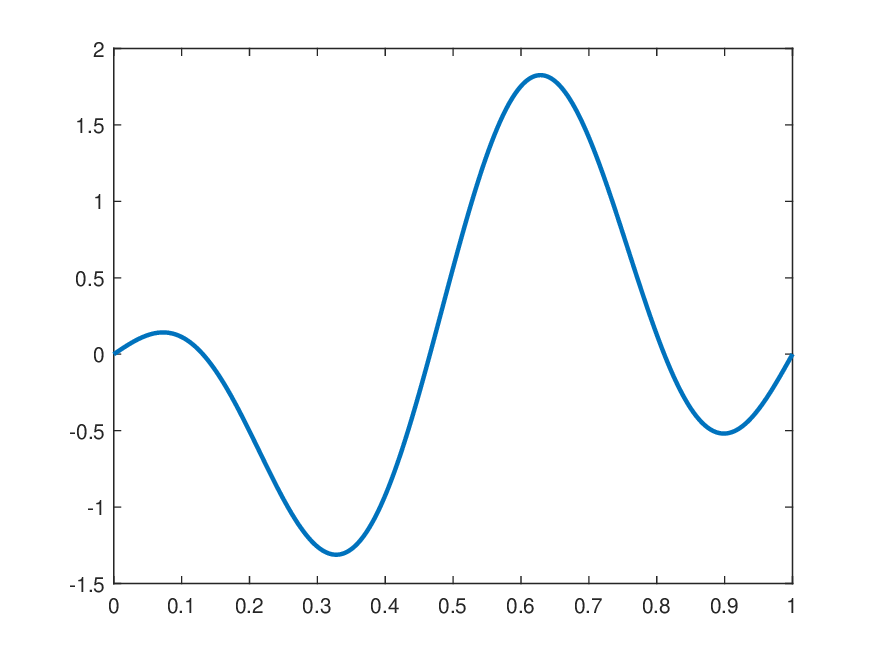}
      \end{minipage}
    }
    \subfigure[$n=8$ ]{
      \begin{minipage}[t]{0.22\linewidth}
      \centering
      \includegraphics[width=1.5in, height=1.5in]{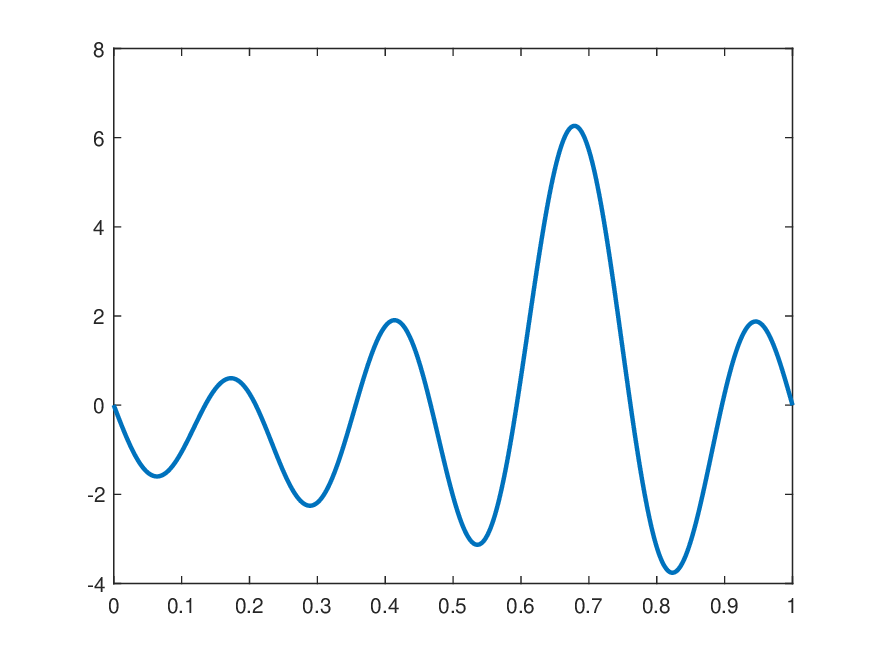}
      \end{minipage}
    }
    \subfigure[ $n=16$]{
       \begin{minipage}[t]{0.22\linewidth}
      \centering
      \includegraphics[width=1.5in, height=1.5in]{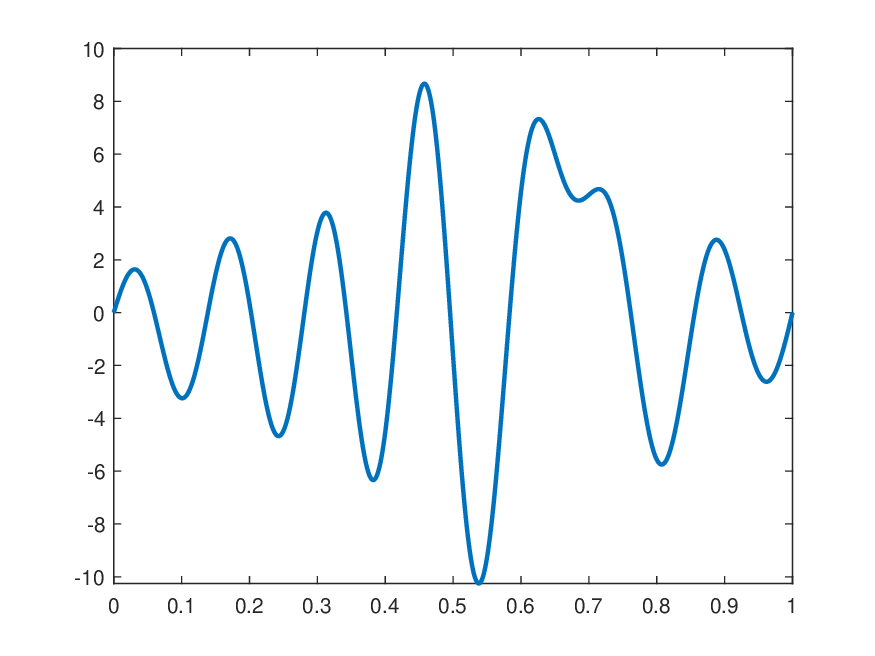}
      \end{minipage}
    }
    \subfigure[ $n=32$]{
    \begin{minipage}[t]{0.22\linewidth}
   \centering
   \includegraphics[width=1.5in, height=1.5in]{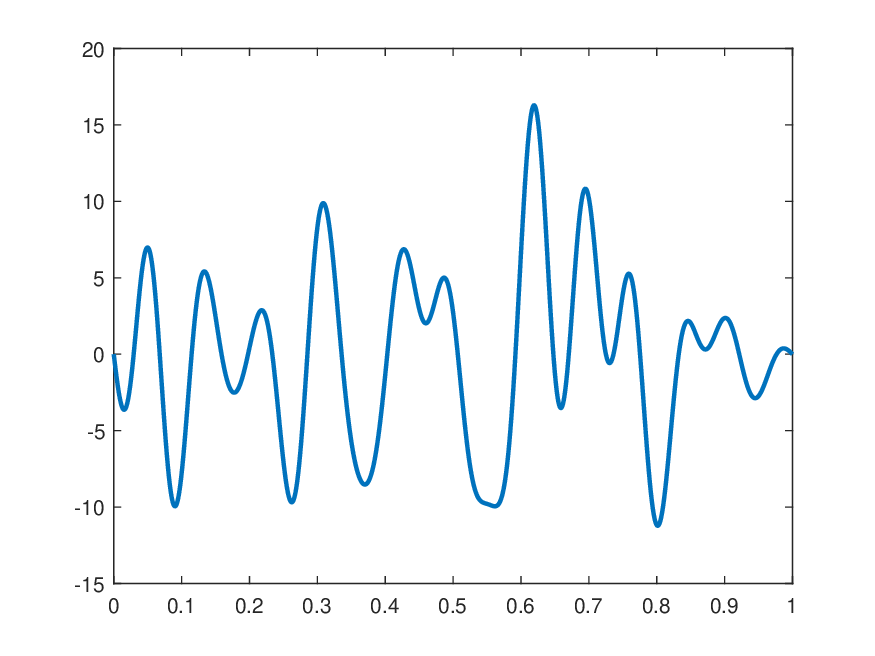}
   \end{minipage}
  }
 \caption{Truncated noise in orthogonal basis forms (\ref{1noise}) for various values of $n=4,8,16,32$. }\label{white-noise1}
  \end{figure}

\textbf{(Noises in  Fourier forms)} 
To improve the regularity of noise, following the same line \textcolor{black}{as in} \cite{DZ:SIAMJNA:2002}, the white noises can be represented by the Fourier series expansion
\begin{equation}\label{noise}
\dot{W}(t)=\sum_{k=1}^{\infty}\gamma_{k}\eta_{k}\chi_{k}(t),
\end{equation}
with the truncated expansion
\begin{equation}\label{appnoise}
\dot{W}_{n}(t)=\sum_{k=1}^{\infty}\gamma_{k}^{n}\eta_{k}\chi_{k}(t),
\end{equation}
where 
\begin{equation}
\gamma_{k}^{n}=\left\{
\begin{aligned}
\gamma & , & if\quad k\leq n, \\
0 & , & if\quad k> n.
\end{aligned}
\right.
\end{equation}
Considering the noise representation, we make the following assumptions.
\begin{itemize}
  \item[(1)] The $\{\eta_{k}\}_{k=1}^{\infty}$ are mutually independent standard Gaussian random variables and satisfy $\text{cov}(\eta_{k},\eta_{l})=E(\eta_{k}\eta_{l})=q_{kl}$, namely,
\begin{equation}\label{eq:qkl}
q_{kl}=\delta_{kl}=\left\{
\begin{aligned}
1 & , & if\quad k=l, \\
0 & , & if\quad k\neq l.
\end{aligned}
\right.
\end{equation}
  \item[(2)] The deterministic functions $\{\chi_{k}(t)\}_{k=1}^{\infty}$ form an orthonormal basis in $L^{2}([0,T])$.
  \item[(3)] The coefficients $\{\gamma_{k}\}_{k=1}^{\infty}$ are to be selected to ensure the convergence of the series. For example, we can choose different decay coefficients: $\gamma_{k}=\frac{1}{2^{k}}, \frac{1}{k^{3/2}}$, and so on.
\end{itemize}

\begin{figure}[htpb]
  \subfigure[$n=4$]{
    \begin{minipage}[t]{0.22\linewidth}
    \centering
    \includegraphics[width=1.5in, height=1.5in]{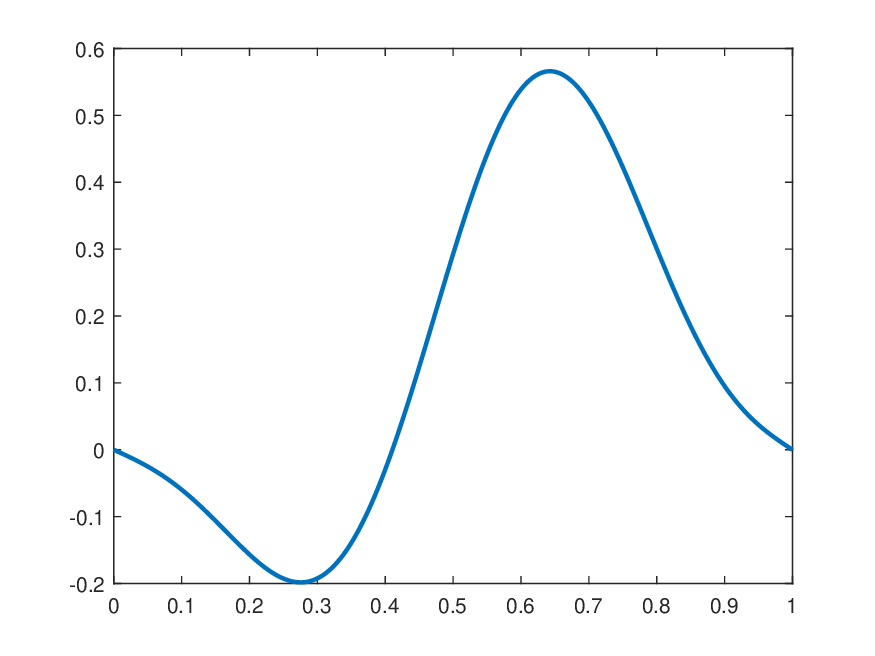}
    \end{minipage}
  }
  \subfigure[$n=8$]{
    \begin{minipage}[t]{0.22\linewidth}
    \centering
    \includegraphics[width=1.5in, height=1.5in]{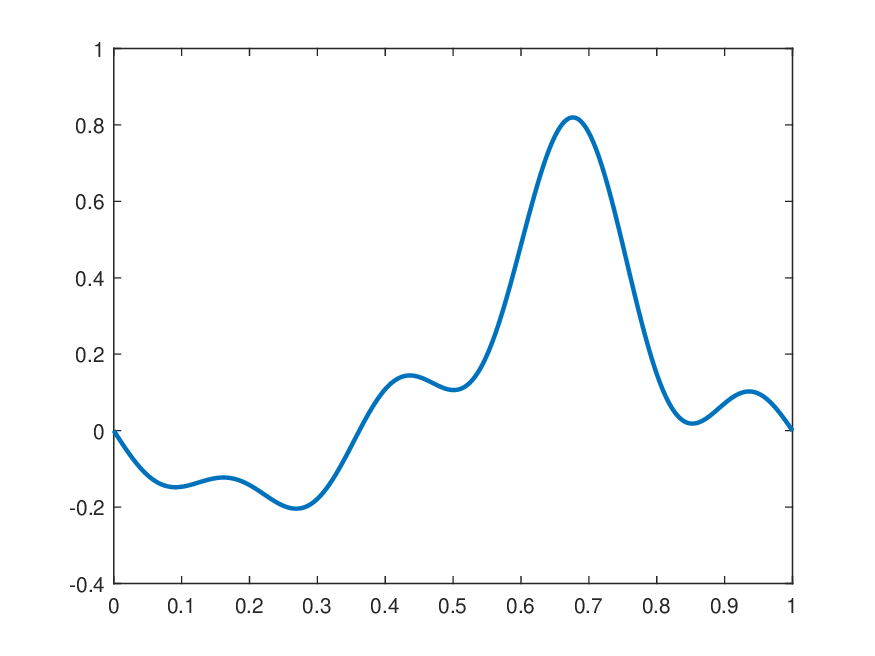}
    \end{minipage}
  }
  \subfigure[$n=16$]{
     \begin{minipage}[t]{0.22\linewidth}
    \centering
    \includegraphics[width=1.5in, height=1.5in]{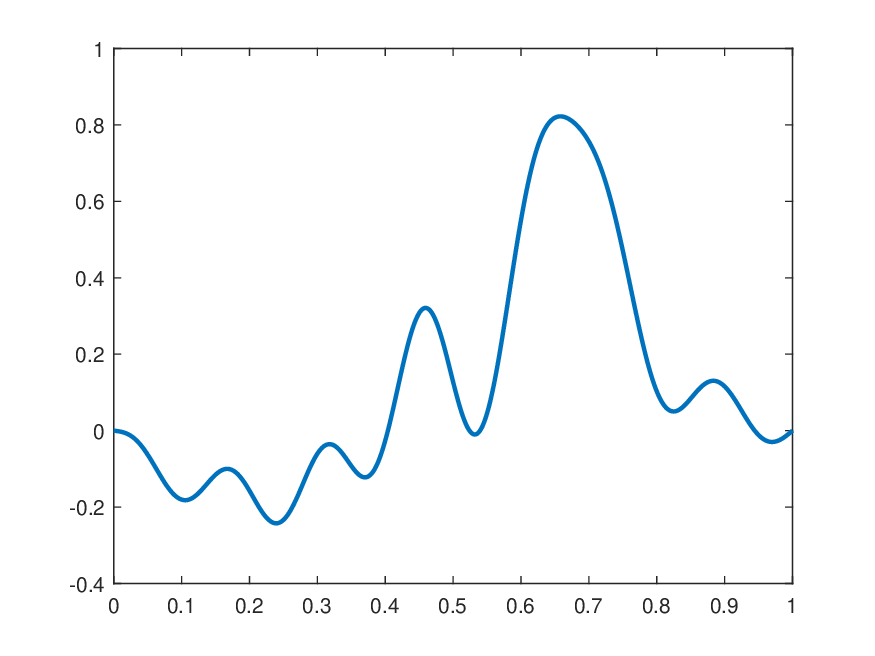}
    \end{minipage}
  }
  \subfigure[$n=32$]{
  \begin{minipage}[t]{0.22\linewidth}
 \centering
 \includegraphics[width=1.5in, height=1.5in]{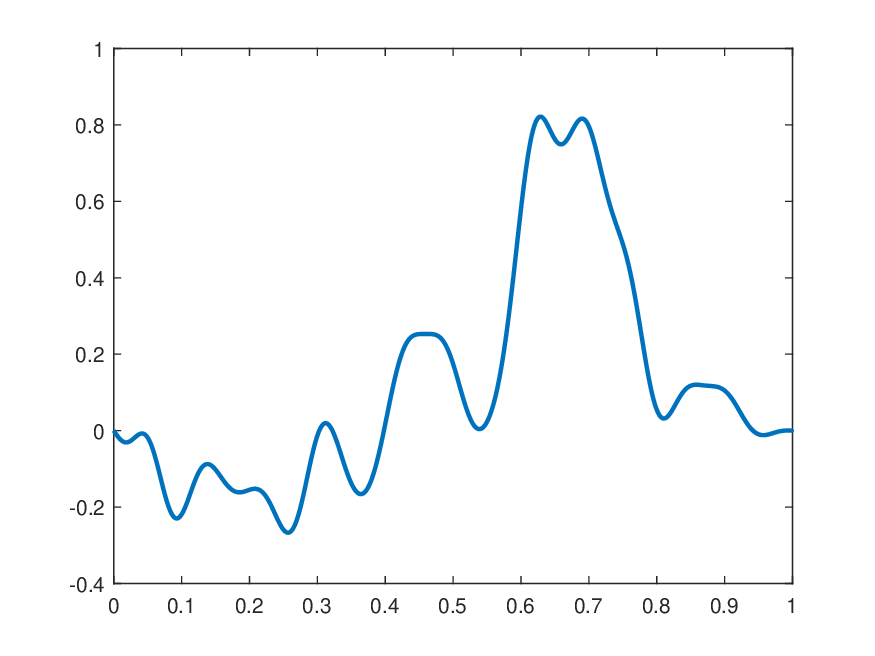}
 \end{minipage}
}
\caption{Truncated noise in  Fourier forms (\ref{appnoise}) with decay coefficients $\gamma_{k}=\frac{1}{k^{3/2}}$ for various values of $n=4,8,16,32$. }\label{Fourierwhite-noise1}
\end{figure}
\begin{figure}[htpb]
  \subfigure[$n=4$]{
    \begin{minipage}[t]{0.22\linewidth}
    \centering
    \includegraphics[width=1.5in, height=1.5in]{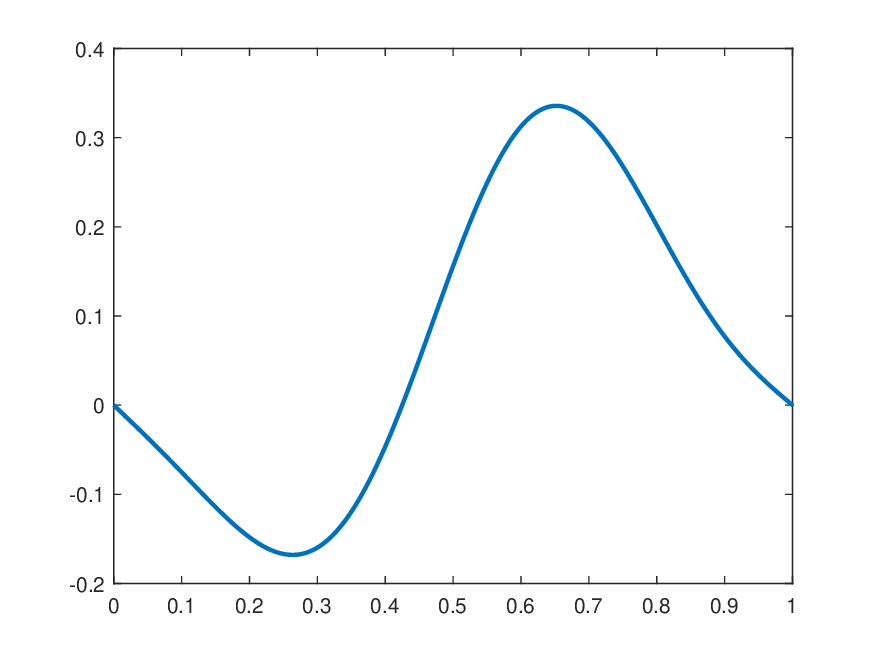}
    \end{minipage}
  }
  \subfigure[$n=8$]{
    \begin{minipage}[t]{0.22\linewidth}
    \centering
    \includegraphics[width=1.5in, height=1.5in]{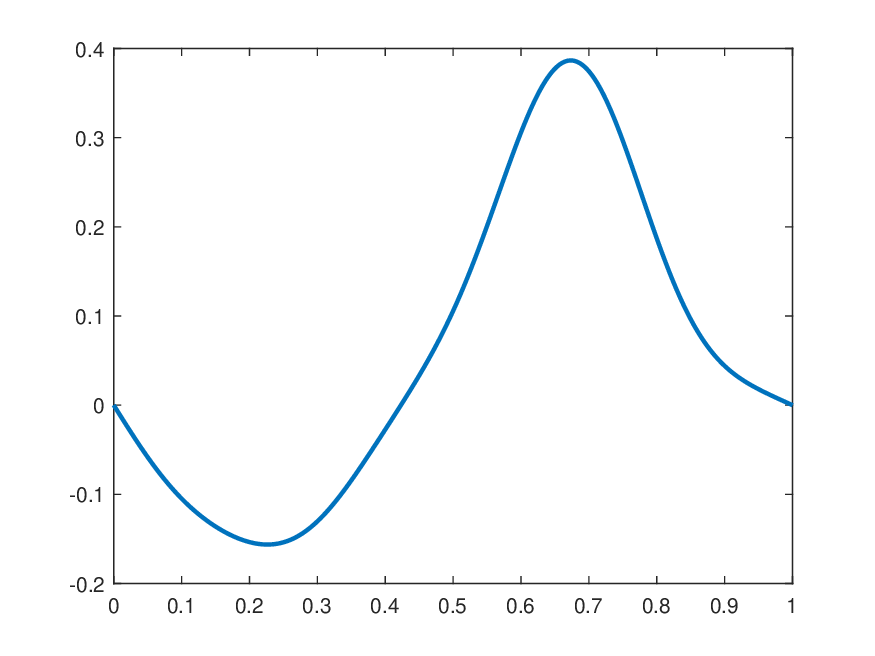}
    \end{minipage}
  }
  \subfigure[$n=16$]{
     \begin{minipage}[t]{0.22\linewidth}
    \centering
    \includegraphics[width=1.5in, height=1.5in]{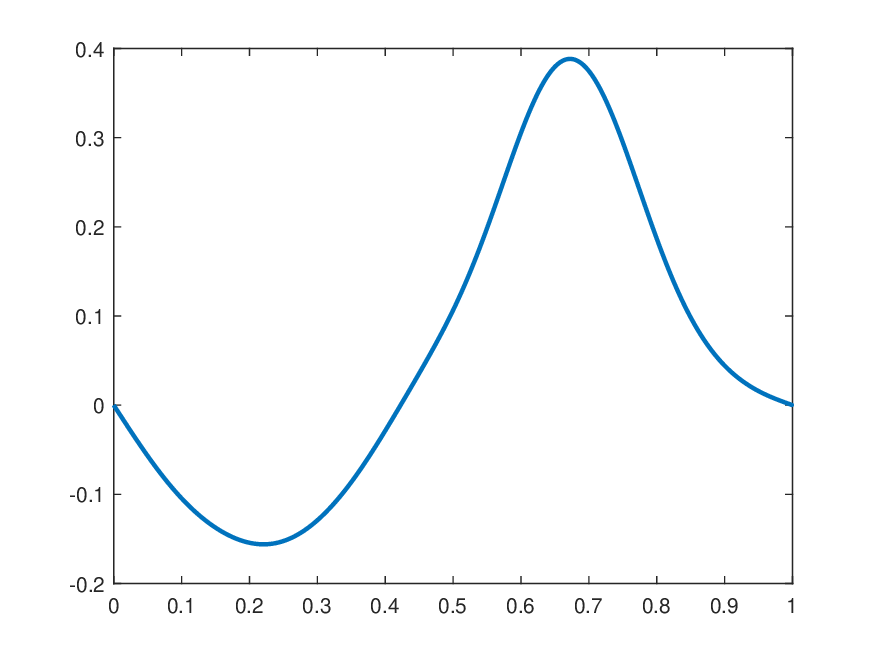}
    \end{minipage}
  }
  \subfigure[$n=32$]{
  \begin{minipage}[t]{0.22\linewidth}
 \centering
 \includegraphics[width=1.5in, height=1.5in]{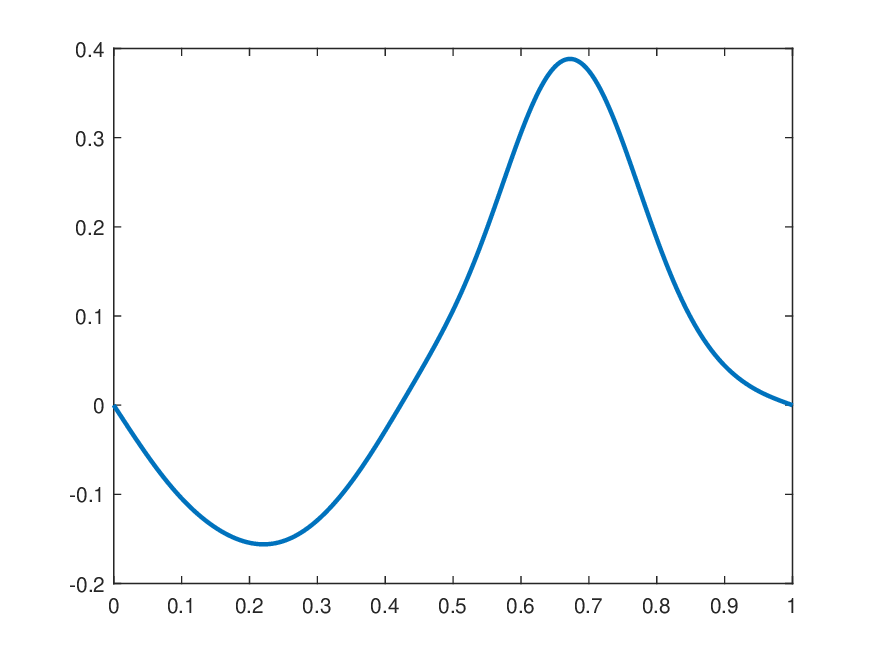}
 \end{minipage}
}
\caption{Truncated noise in  Fourier forms (\ref{appnoise}) with decay coefficients $\gamma_{k}=\frac{1}{2^{k}}$ for various values of $n=4,8,16,32$. }\label{Fourierwhite-noise2}
\end{figure}
Fig. \ref{Fourierwhite-noise1} and Fig. \ref{Fourierwhite-noise2} provide sample realizations of noises having forms ({\ref{appnoise}}) in the Fourier basis with coefficients $\gamma_{k}=\frac{1}{k^{3/2}}$ and $\gamma_{k}=\frac{1}{2^{k}}$ for various values of $n=4,8,16,32$, respectively. We found that the faster the coefficient decays, the smoother the image of truncated noise ({\ref{appnoise}}). The approximation of truncated noise becomes more accurate as the number of noise truncations $n$ increases.

\textcolor{black}{We now substitute $\dot{W}(t)$ by the finite dimensional noise $\dot{W}_{n}(t)$ in (\ref{u}). Thus, $u_{n}(x,t)$ satisfies the following equation}
\begin{equation}\label{equ:un} \small
  \left\{
 \begin{aligned}
 (u_{n}(x,t))_{t}- \nabla \cdot \big(\kappa(x,t) \nabla \big)u_{n}(x,t)+g(u_{n}(x,t))&=f(x,t)+\dot{W}_{n}(t),\quad (x,t)\in D\times (0,T], \\
 u_{n}(x,t)&=0,\quad (x,t)\in\partial D\times (0,T],\\
 u_{n}(x,0)&=u_{0}(x),\quad x\in D.
 \end{aligned}
   \right.
 \end{equation}

\subsection{Formulae of stochastic CEM-GMsFEM}
\label{cembasis}
\qquad The domain $D$ (Fig. \ref{grid1}) is composed of a family of meshes $\mathcal{T}^{H}$, where $H=\underset{K_i\in\mathcal{T}^{H}}{\max}H_{K_i}$ is the mesh size. $\mathcal{T}^{H}$ is the conforming partition of $D$, and is shape regular. $\mathcal{T}^{h}$ is a conforming refinement of $\mathcal{T}^{H}$, and $h$ is the diameter of fine grid. $N$ denotes the number of elements in $\mathcal{T}^{H}$, and $N_{c}$ denotes the number of vertices of all coarse grid. Let $\{x_{i}\}_{i=1}^{N_{c}}$ be the set of vertices in $\mathcal{T}^{H}$ and $\omega_{i}=\bigcup\{K_{j}\in \mathcal{T}^{H}|x_{i}\in\overline{K_{j}}\}$ be the neighborhood of the node $x_{i}$. For each coarse block $K_i$, the oversampling region $K_{i,m}\subset D$ is defined by enlarging $K_{i}$ with several coarse grid layers.
\begin{figure}[ht]
  \centering
  \includegraphics[width=2.0in, height=1.8in]{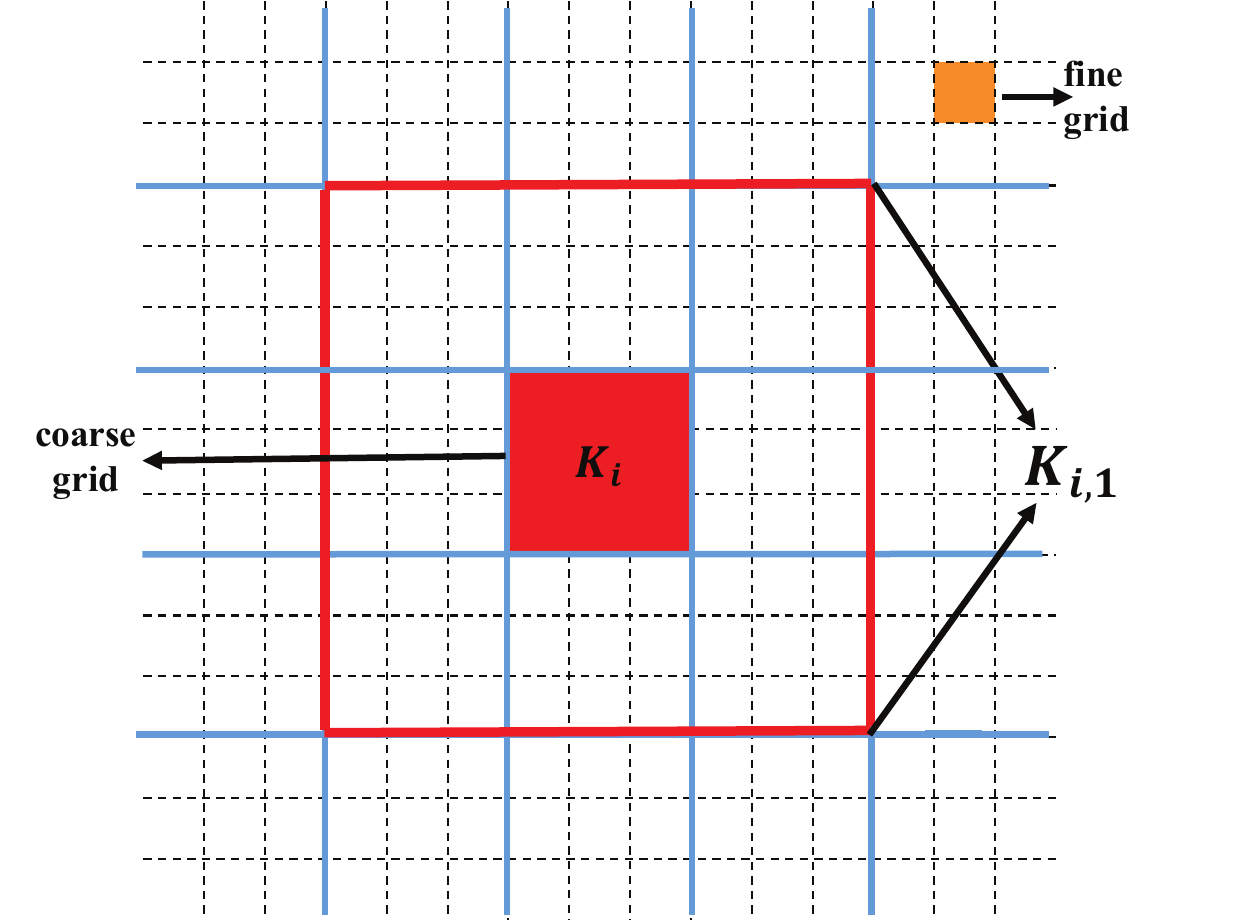}
  \includegraphics[width=1.6in, height=1.8in]{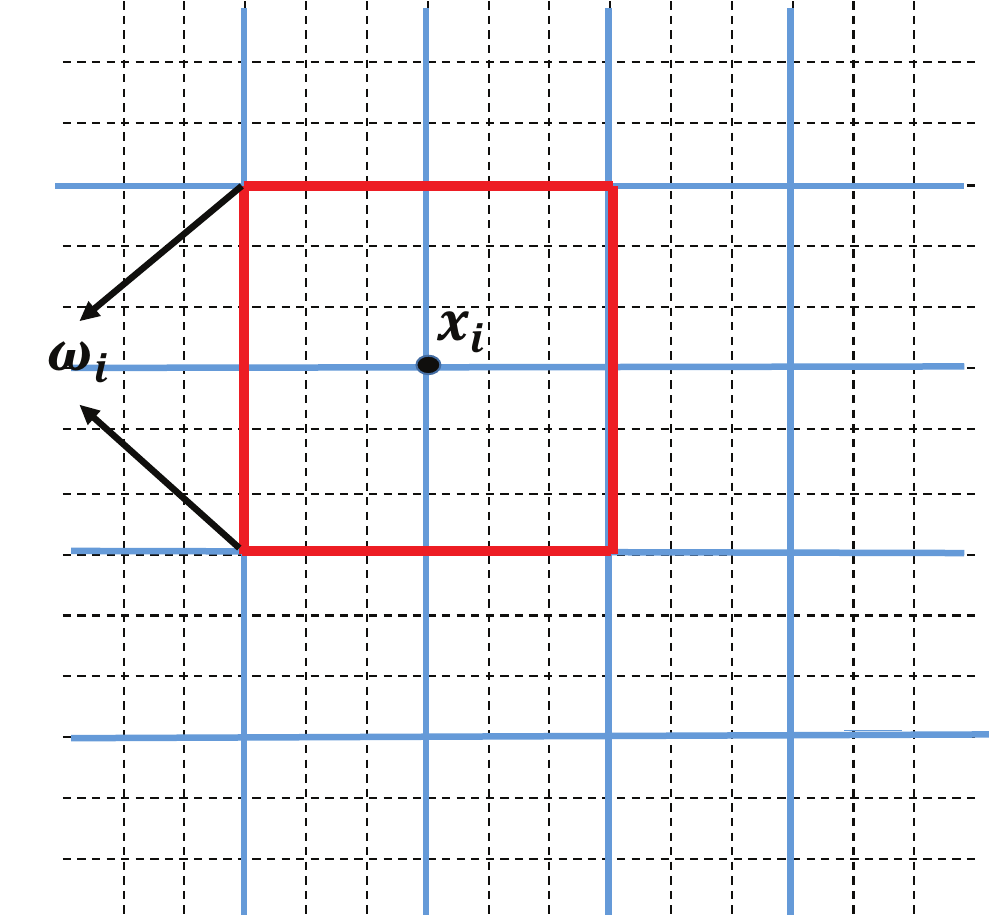}
  \caption{ The fine grid, coarse grid $K_{i}$, oversampling domain $K_{i,1}$, and neighborhood $\omega_{i}$ of the nodes $x_{i}$.}\label{grid1}
 \end{figure}
 Then, the procedure for the construction of the stochastic CEM-GMsFEM formulation can be divided into three steps. Firstly, let $V(K_{i})=H^{1}(K_{i})$, the following spectral problem is given to find eigen-pairs $\{\lambda_{j}^{(i)},  \phi_{j}^{(i)}\}\in \mathbb{R}\times V(K_{i})$ such that
\begin{equation}\label{cem-auxproblem}
a_{i}(\phi_{j}^{(i)},v) =\lambda_{j}^{(i)}s_{i}(\phi_{j}^{(i)},v)\quad \forall v\in V(K_{i}),
\end{equation}
where
\begin{equation}
a_{i}(u,v) = \int_{K_{i}} \kappa \nabla u \cdot \nabla v dx, \quad s_{i}(u,v)=\int_{K_{i}}\widetilde{\kappa}uv dx.
\end{equation}
Here $\widetilde{\kappa}=\kappa\sum_{j=1}^{N_{c}}|\nabla\chi_{j}|^{2}$ and $\{\chi_{j}\}$ is a set of partition of unity functions for the coarse partition. We assume that the eigenfunctions satisfy the normalized condition $s_{i}(\phi_{j}^{(i)},\phi_{j}^{(i)})=1$. Let the eigenvalues $\lambda_{j}^{(i)}$ be arranged in ascending order, i.e., $\lambda_{1}^{(i)}\leq \lambda_{2}^{(i)}\leq \cdots$. We define the local auxiliary multiscale space $V_{aux}^{(i)}$ by using the first $L_{i}$ eigenfunctions
\begin{equation}
V_{aux}^{(i)}=\text{span} \{\phi_{j}^{(i)}|1\leq j\leq L_{i}\}.
\end{equation}
Then the global auxiliary space $V_{aux}$ is defined by using these local auxiliary spaces, i.e.,
\begin{equation}
V_{aux}=\bigoplus_{i=1}^{N}V_{aux}^{(i)}.
\end{equation}
To construct the CEM-GMsFEM basis functions, we need the following definition.
\begin{definition}[$\phi_{j}^{(i)}$-orthogonality]
  Given a function $\phi_{j}^{(i)}\in V_{aux}$, if a function $\psi\in V$ satisfies
\begin{equation}
s(\psi,\phi_{j}^{(i)}) = 1,\quad s(\psi,\phi_{j'}^{(i')}) = 0\quad if \ j'\neq j \ or \  i'\neq i,
\end{equation}
then we say that $\psi$ is $\phi_{j}^{(i)}$-orthogonal, where $\displaystyle s(u,v)=\sum_{i=1}^{N}s_{i}(u,v)$. 
\end{definition}
We define an operator $\pi:V\rightarrow V_{aux}$ by
\begin{equation}
\pi(v) = \sum_{i=1}^{N}\sum_{j=1}^{L_{i}}s_{i}(v,\phi_{j}^{(i)})\phi_{j}^{(i)},\quad\forall v\in V.
\end{equation}
The null space of the operator $\pi$ is defined by $\widetilde{V}=\{v\in V|\pi(v)=0\}$.

Secondly, let $H^1_{0}(D)(K_{i,m}):=H_0^1(K_{i,m})$, the multiscale basis function $\psi_{j,ms}^{(i)}\in H^1_{0}(D)(K_{i,m})$ can be obtained by solving the following constrained energy minimization problem
\begin{equation}\label{min}
\psi_{j,ms}^{(i)}=\text{argmin}\big\{a(\psi,\psi)|\psi\in H^1_{0}(D)(K_{i,m}), \psi \ is \ \phi_{j}^{(i)}\text{-orthogonal}\big\}.
\end{equation}
The multiscale spaces $V_{ms}$ can be given by all multiscale basis functions
$\psi_{j,ms}^{(i)}$ as follows
\begin{equation}
  V_{ms}=\text{span}\big\{\psi_{j,ms}^{(i)}|1\leq j\leq L_{i},1\leq i\leq N\big\}.
\end{equation}

\subsection{Semi-implicit predictor-corrected numerical scheme with spatial discretization methods}
\label{algrithom:computation}
\qquad In this subsection, we present a semi-implicit predictor-corrected numerical scheme with the standard spatial finite element method(\text{FEM}) and CEM-GMsFEM, respectively. The discrete forms of the solution $u$ satisfies the Eq.(\ref{eq-uweak}) and multiscale solution $u_{n}^{ms}$ satisfies the Eq.(\ref{eq-umsweak}). Let $\left\{t^{j}\right\}_{j=1}^I $ be a uniform partition in the time direction with $t^{j}:=j\Delta t$, $T=I\Delta t$ and denote
\begin{equation}
 u^{j}:=u(x,t^{j}), \quad u_{n}^{h,j}:=u_{n}^{h}(x,t^{j}),\quad u_{n}^{ms,j}:=u_{n}^{ms}(x,t^{j}),
\end{equation}
\begin{equation}
 f^j = f(x,t_j), \quad \dot{W}^j=\dot{W}(t_j), \quad \dot{W}_{n}^j=\dot{W}_{n}(t_j),
\end{equation}
for $j=1,2,\cdots, I$.

Now we are ready to introduce the semi-implicit predictor-corrected numerical scheme with Galerkin finite element approximations for problem (\ref{u}). For $j=1,2,\cdots, I$, find $u_{n}^{h,j}\in V_{h}$ such that 
\begin{equation}\label{fem-discrete} \small
  \left\{
   \begin{aligned}
  \bigg(\frac{\hat{u_{n}}^{h,j}-u_{n}^{h,j-1}}{\Delta t},v\bigg)+a(\hat{u_{n}}^{h,j},v)+\sigma(x)(u_{n}^{h,j-1})^3(\hat{u_{n}}^{h,j} ,v)&=(f^j,v)+(\dot{W}_{n}^j,v),\\
  \bigg(\frac{u_{n}^{h,j}-u_{n}^{h,j-1}}{\Delta t},v\bigg)+a(u_{n}^{h,j},v)+\sigma(x)(\hat{u_{n}}^{h,j})^3(u_{n}^{h,j} ,v)&=(f^j,v)+(\dot{W}_{n}^j,v),\\  
    (u_{n}^{h,0},v)&=(u_{0},v),
  \end{aligned}
     \right.
  \end{equation}
$\forall ~ v\in V_{ms}.$  By using the expression $u_{n}^{h,j}:=\Sigma_{l=1}^{N_{f}}u_{l}(t_{j})\varphi_{l}^{fem}(x)$, where $N_{f}$ is the number of degree of freedoms in the \text{FEM} and $\varphi_{l}^{fem}(x)$ denotes finite element basis functions corresponding to fine grids, it can lead to a system of algebraic equations:
 \begin{equation}\label{fem-matric}
 \left\{
 \begin{aligned}
\bm{M}\bm{\hat{U} }^{j}+\Delta t \bm{A}\bm{\hat{U} }^{j}+\Delta t\bm{N}(\bm{U}^{j-1})\bm{\hat{U} }^{j}&=\Delta t \bm{F}^{j}+\Delta t\bm{\Upsilon}^{j}+\bm{M}\bm{U}^{j-1},\\
\bm{M}\bm{U}^{j}+\Delta t \bm{A}\bm{U}^{j}+\Delta t\bm{N}(\bm{\hat{U} }^{j})\bm{U}^{j}&=\Delta t \bm{F}^{j}+\Delta t\bm{\Upsilon}^{j}+\bm{M}\bm{U}^{j-1},\\
 \bm{U}^{0}&=\bm{U}(0),
    \end{aligned}
   \right.
\end{equation}
where $\Delta t$ is the time-step size and the superscript $j$ represents the temporal level of the solution. \textcolor{black}{If we set} $\bm{M}=(\bm{m}_{lq})\in \mathds{R}^{N_{f}\times N_{f}}, \bm{A}=(\bm{\alpha}_{lq})\in \mathds{R}^{N_{f}\times N_{f}}, \bm{N}(z)=\bm{n}_{lq}(z)\in \mathds{R}^{N_{f}\times N_{f}},\bm{F}=(\bm{\beta}_{q})\in \mathds{R}^{N_{f}},
\bm{\Upsilon}=(\bm{\gamma}_{q})\in \mathds{R}^{N_{f}},\bm{U^{j}}\in \mathds{R}^{N_{f}}$, and
\begin{equation}
 \begin{split}
 \bm{m}_{lq}&=\int_{D}\varphi_{l}^{fem}(x)\varphi_{q}^{fem}(x),\\
  \bm{\alpha}_{lq}&=\int_{D}\kappa(x) \nabla\varphi_{l}^{fem}(x)\cdot \nabla\varphi_{q}^{fem}(x),\\
  \bm{n}_{lq}(z)&=\sigma(x)z^{3}\int_{D} \varphi_{l}^{fem}(x)\varphi_{q}^{fem}(x),\\
 \bm{\beta}_{q}&=\int_{D}f(x,t_{j})\varphi_{q}^{fem}(x),\\
  \bm{\gamma}_{q}&=\sum_{k=1}^{\infty}\gamma_{k}\eta_{k}\int_{D}\psi_{k}(t_{j})\varphi_{q}^{fem}(x),\\
  \bm{U}^{j}&=[u_{1}(t_{j}),u_{2}(t_{j}),\cdots,u_{N_{f}}(t_{j})]^{T},\\
 \end{split}
\end{equation}
for $l=1,2,\cdots,N_{f}$, $q=1,2,\cdots,N_{f}$.

Based on the subsection \ref{cembasis}, we use the multiscale basis functions to approximate the solution. The semi-implicit predictor-corrected numerical scheme based on CEM-GMsFEM is
\begin{equation}\label{cem-discrete} \small
\left\{
 \begin{aligned}
\bigg(\frac{\hat{u_{n}}^{ms,j}-u_{n}^{ms,j-1}}{\Delta t},v\bigg)+a(\hat{u_{n}}^{ms,j},v)+\sigma(x)(u_{n}^{ms,j-1})^3(\hat{u_{n}}^{ms,j} ,v)&=(f^j,v)+(\dot{W}_{n}^j,v),\\
\bigg(\frac{u_{n}^{ms,j}-u_{n}^{ms,j-1}}{\Delta t},v\bigg)+a(u_{n}^{ms,j},v)+\sigma(x)(\hat{u_{n}}^{ms,j})^3(u_{n}^{ms,j} ,v)&=(f^j,v)+(\dot{W}_{n}^j,v),\\  
  (u_{n}^{ms,0},v)&=(u_{0,ms},v),
\end{aligned}
   \right.
\end{equation}
$\forall~ v\in V_{ms}.$ Then, the multiscale solutions can be represented by $u_{n}^{ms,j} :=\Sigma_{l=1}^{N_{ms}}y_{l}(t_{j})\phi_{l}^{ms}(x)$, where $N_{ms}$ is the number of degree of freedoms in the \text{CEM-GMsFEM}, and $\phi_{l}^{ms}(x)$ denotes multiscale basis functions corresponding to course grids. In fact, the above multiscale equations (\ref{cem-discrete}) can be written in matrix form as
 \begin{equation}\label{cem-matric}
 \left\{
 \begin{aligned}
\bm{\widehat{M}}\bm{\hat{Y} }^{j}+\Delta t \bm{\widehat{A}}\bm{\hat{Y} }^{j}+\Delta t\bm{\widehat{N}}(\bm{Y}^{j-1})\bm{\hat{Y} }^{j}&=\Delta t \bm{\widehat{F}}^{j}+\Delta t\bm{\widehat{\Upsilon}}^{j}+\bm{\widehat{M}}\bm{Y}^{j-1},\\
\bm{\widehat{M}}\bm{Y}^{j}+\Delta t \bm{\widehat{A}}\bm{Y}^{j}+\Delta t\bm{\widehat{N}}(\bm{\hat{Y} }^{j})\bm{Y}^{j}&=\Delta t \bm{\widehat{F}}^{j}+\Delta t\bm{\widehat{\Upsilon}}^{j}+\bm{\widehat{M}}\bm{Y}^{j-1},\\
\bm{Y}^{0}&=\bm{Y}(0),
    \end{aligned}
   \right.
\end{equation}
where we set $\bm{\widehat{F}}=(\bm{\widehat{\beta}}_{q})\in \mathds{R}^{N_{ms}},
\bm{\widehat{\Upsilon}}=(\bm{\widehat{\gamma}}_{q})\in \mathds{R}^{N_{ms}},\bm{Y^{j}}\in \mathds{R}^{N_{ms}}$, and
\begin{equation}
 \begin{split}
 \bm{\widehat{m}}_{lq}&=\int_{D}\phi_{l}^{ms}(x)\phi_{q}^{ms}(x),\\
  \bm{\widehat{\alpha}}_{lq}&=\int_{D}\kappa(x) \nabla\phi_{l}^{ms}(x)\cdot \nabla\phi_{q}^{ms}(x),\\
  \bm{\widehat{n}}_{lq}(z)&=\sigma(x)(z)^{3}\int_{D} \phi_{l}^{ms}(x)\phi_{q}^{ms}(x),\\
 \bm{\widehat{\beta}}_{q}&=\int_{D}f(x,t_{j})\phi_{q}^{ms}(x),\\
  \bm{\widehat{\gamma}}_{q}&=\sum_{k=1}^{\infty}\gamma_{k}^{n}\eta_{k}\int_{D}\psi_{k}(t_{j})\phi_{q}^{ms}(x),\\
  \bm{Y}^{j}&=[y_{1}(t_{j}),y_{2}(t_{j}),\cdots,y_{N_{ms}}(t_{j})]^{T},\\
 \end{split}
\end{equation}
for $l=1,2,\cdots,N_{ms}$, $q=1,2,\cdots,N_{ms}$.

Set $\bm{{\phi}^{ms}}=[\phi_{1}^{ms},\phi_{2}^{ms}, \cdots,\phi_{N_{ms}}^{ms}]$, then we can find
\begin{equation}
 \begin{split}
 \bm{\widehat{M}}&=(\bm{\widehat{m}}_{lq})=(\bm{{\phi}^{ms}})^{T}\bm{M}\bm{{\phi}^{ms}}\in \mathds{R}^{N_{ms}\times N_{ms}}, \\ \bm{\widehat{A}}&=(\bm{\widehat{\alpha}}_{lq})=(\bm{{\phi}^{ms}})^{T}\bm{A}\bm{{\phi}^{ms}}\in \mathds{R}^{N_{ms}\times N_{ms}}, \\ \bm{\widehat{N}}&=(\bm{\widehat{n}}_{lq})=(\bm{{\phi}^{ms}})^{T}\bm{N}\bm{{\phi}^{ms}}\in \mathds{R}^{N_{ms}\times N_{ms}},\\
  \end{split}
\end{equation}
where $N_{ms}\ll N_{f}$.
\textcolor{black}{If we choose a suitable linear system solver, the computational cost of Eq.(\ref{cem-discrete}) is $O(N_{ms}^2)$, while The computational cost of Eq. (\ref{fem-discrete}) is $O(N_{f}^2)$.
}
This implies that the reduced model Eq. (\ref{cem-discrete}) can lessen the computational burden of the full model (\ref{fem-discrete}), i.e., instead of direct numerical simulations on fine grids, CEM-GMsFEM is applied as a model reduction technique. It can solve these multiscale model problems in a lower dimensional space to improve computation efficiency.

\section{The convergence analysis}
\qquad \textcolor{black}{In this section, we analyze the convergence and error estimates for the proposed semi-implicit stochastic multiscale method of the problem (\ref{u}).} The error can be divided into two parts: the approximate truncated error estimate $\mathbb{E} \|u-u_{n}\|_{L^{2}(D)}^{2}$ and the multiscale error estimate $\mathbb{E} \|u_{n}-u_{ms}\|_{L^{2}(D)}^{2}$. In the subsection \ref{The convergence analysis:u-un}, we analyze the error estimate between the approximate solution $u_{n}$ and the solution of the original equation $u$ in the form of the mild solution. In the subsection \ref{The convergence analysis:un-ums}, we derive the error estimates for multiscale methods using the variational solutions. 
\subsection{Approximate truncated error estimate}
\label{The convergence analysis:u-un}
\begin{definition}(Mild Solution)
 When $A=-\nabla \cdot \big(\kappa(x) \nabla \big)$ is a second-order differential and positive definite operator, the mild solution for Eq. (\ref{u}) is
  \begin{equation}\label{Equ:umild} \small
  u(t)=e^{-tA}u_{0}+\int_0^t e^{-(t-s)A}f(s)ds+\int_0^t e^{-(t-s)A}dW(s)-\int_0^t e^{-(t-s)A}g(u(s))ds, \quad \forall t\in (0,T],
  \end{equation}
where $e^{-tA}(t\in (0,T])$ is the analytic $C_{0}-$semigroup generalized by $A$.
\end{definition}

Similarly, after approximating the infinite-dimensional noise $\dot{W}(t)$ by truncation noise $\dot{W}_{n}(t)$, the mild solution $u_{n}(t)$ of problems (\ref{equ:un}) can be obtained based on $It\hat{o}$ integration
\begin{equation}\label{Equ:unmild} \small
u_{n}(t)=e^{-tA}u_{0}+\int_0^t e^{-(t-s)A}f(s)ds+\int_0^t e^{-(t-s)A}dW_{n}(s)-\int_0^t e^{-(t-s)A}g(u_{n}(s))ds, \quad \forall t\in (0,T].
\end{equation}

Under the mild solutions (\ref{Equ:umild}) and the results of \cite{ZGJ:JCAM:2021}, we can conclude
\begin{lemma}\label{thmzeta}
Let $\displaystyle\zeta(t)=\int_0^t e^{-(t-s)A}dW(s)-\int_0^t e^{-(t-s)A}dW_{n}(s)$, there hold
\begin{equation}\displaystyle\mathbb{E} \|\zeta(t)\|_{L^{2}(D)}^{2} \leq C \|\bm{\gamma}-\bm{\gamma^{n}}\|_{Q_{-1}}^{2},\quad \forall t> 0,
\end{equation}
where
\begin{equation}
\bm{\gamma^{n}}=(\gamma_{1}^{n},\gamma_{2}^{n},...,\gamma_{k}^{n},...)^{T},
\end{equation}
and 
\begin{equation}
\bm{\gamma}=(\gamma_{1},\gamma_{2},...,\gamma_{k},...)^{T},
\end{equation}
are infinite column vectors of noise's coefficients.\\
The seminorm is defined by
\begin{equation}
 \|\bm{\gamma}\|_{Q_{s}}^{2}=(\bm{\gamma},\bm{\gamma})_{Q_{s}}=\bm{\gamma}^{T}\cdot Q_{s} \cdot \bm{\gamma}=\sum_{k=1}^{\infty}\sum_{l=1}^{\infty}\gamma_{k}\gamma_{l}(kl)^{s}q_{kl},
\end{equation}
where $Q_{s}$ represents the infinite matrix with entries $Q_{s}=((kl)^{s}q_{kl})_{k,l=1}^{\infty}$ for an integer $s$, and $q_{kl}$ is defined by Eq.(\ref{eq:qkl}).
\end{lemma}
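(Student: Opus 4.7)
The plan is threefold: (i) expand $\zeta(t)$ by substituting the Fourier representations of the two noises into its definition; (ii) use the covariance identity $\mathbb{E}[\eta_k\eta_l]=q_{kl}$ to reduce $\mathbb{E}\|\zeta(t)\|_{L^2(D)}^2$ to a bilinear form in the coefficient differences $\gamma_k-\gamma_k^n$; and (iii) identify that bilinear form with the $Q_{-1}$ seminorm by extracting a factor of $(kl)^{-1}$ from the deterministic time--space kernel via the smoothing property of the analytic semigroup $e^{-tA}$.

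For Step (i), substituting (\ref{noise}) and (\ref{appnoise}) into $\zeta$ yields
\begin{equation*}
\zeta(t)=\sum_{k=1}^{\infty}(\gamma_k-\gamma_k^n)\,\eta_k\,I_k(t),\qquad I_k(t):=\int_0^t e^{-(t-s)A}\chi_k(s)\,ds.
\end{equation*}
For Step (ii), squaring in $L^2(D)$, taking expectation, and invoking $\mathbb{E}[\eta_k\eta_l]=q_{kl}$ from (\ref{eq:qkl}) gives
\begin{equation*}
\mathbb{E}\|\zeta(t)\|_{L^2(D)}^2=\sum_{k,l=1}^{\infty}(\gamma_k-\gamma_k^n)(\gamma_l-\gamma_l^n)\,q_{kl}\,(I_k(t),I_l(t))_{L^2(D)}.
\end{equation*}
For Step (iii), the crucial deterministic bound is $\|I_k(t)\|_{L^2(D)}\leq C/k$ uniformly in $t\in(0,T]$; combined with the Cauchy--Schwarz inequality this yields $|(I_k(t),I_l(t))_{L^2(D)}|\leq C/(kl)$, and matching coefficients against the definition of $\|\cdot\|_{Q_{-1}}$ produces the claimed inequality.

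To establish the $1/k$ decay of $\|I_k(t)\|_{L^2(D)}$ I would integrate by parts in $s$: the antiderivative $X_k(s):=\int_0^s\chi_k(r)\,dr$ of a Fourier-type basis function satisfies $\|X_k\|_{L^\infty([0,T])}\leq C/k$, and pairing this with the analytic-semigroup estimate $\|A e^{-\tau A}\|_{L^2\to L^2}\leq C/\tau$ delivers the desired decay after the boundary term at $s=t$ is retained and contributes a factor $X_k(t)=O(1/k)$. The main obstacle is the non-integrable singularity of $\|A e^{-\tau A}\|$ at $\tau=0$: one must either split the remaining integral near the diagonal and exploit $\|A^{1/2}e^{-\tau A}\|\leq C/\tau^{1/2}$, which is integrable, or project onto the eigenbasis of $A$ so that the exponential weight $e^{-\tau\mu_m}$ tames $1/\tau$ for each spectral component before summing. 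Since the statement is credited to \cite{ZGJ:JCAM:2021}, the detailed kernel estimate can ultimately be imported from that reference.
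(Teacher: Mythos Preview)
The paper does not actually prove this lemma: it is stated immediately after the phrase ``the results of \cite{ZGJ:JCAM:2021}, we can conclude'' and no argument is given. So there is nothing to compare against beyond the fact that both you and the authors ultimately defer the kernel estimate to that reference.

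Your sketch is a correct outline of the standard argument and in fact supplies more detail than the paper. One simplification worth noting: in this paper $q_{kl}=\delta_{kl}$ (see (\ref{eq:qkl})), so after Step~(ii) the double sum collapses to
\[
\mathbb{E}\|\zeta(t)\|_{L^2(D)}^2=\sum_{k=1}^\infty(\gamma_k-\gamma_k^n)^2\,\|I_k(t)\|_{L^2(D)}^2,
\]
and the target $Q_{-1}$ seminorm is just $\sum_k(\gamma_k-\gamma_k^n)^2/k^2$. Hence you only need $\|I_k(t)\|_{L^2(D)}\le C/k$; the Cauchy--Schwarz detour through $(I_k,I_l)$ is unnecessary here, and in a genuinely non-diagonal setting the entrywise bound $|(I_k,I_l)|\le C/(kl)$ would not by itself dominate the quadratic form without further positivity considerations. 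Your identification of the $\tau\to 0$ singularity in $\|Ae^{-\tau A}\|$ as the main technical point, and the proposed remedies (fractional power or spectral decomposition), are exactly right.
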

Lemma \ref{thmzeta} implies that $\mathbb{E}\|\zeta(t)\|_{L^{2}(D)}^{2}$ converges to $0$ if $\bm{\gamma}^n$ converges to $\bm{\gamma}$.

\begin{theorem}\label{thmapproerror}
if $u$ and $u_{n}$ are the solutions of (\ref{Equ:umild}) and (\ref{Equ:unmild}), respectively. Then, we can conclude that
\begin{equation}
  \mathbb{E} \|u-u_{n}\|_{L^{2}(D)}^{2} \leq C(M\mathbb{E}\|\zeta\|_{L^{2}(D)}+(C_{p}+R)^{2}\mathbb{E}\|\zeta\|_{L^{2}(D)}^{2}),
\end{equation}
where $C_{p}$ is defined in \ref{assumption:cp} and the constant $C$ depends only on $C_{p}$, $L$ and $M$, $R$.
\begin{proof}
  Subtracting Eq.(\ref{Equ:unmild}) from Eq.(\ref{Equ:umild}), we get
  \begin{equation}\label{Equmild:u-un}
  u(t)-u_{n}(t)=\zeta(t)-\int_0^t e^{-(t-s)A}[g(u(s)-g(u_{n}(s))]ds.
  \end{equation}
  Multiplying both sides of Eq.(\ref{Equmild:u-un}) by $g(u(t))-g(u_{n}(t))$, integrating over the region $D$ and using the Assumption \ref{a3}, we have
  \begin{equation}
  \begin{split}
  -L\|u-u_{n}\|_{L^{2}(D)}^{2}
  &\leq \int_{D} \zeta(t)[g(u(t)-g(u_{n}(t))]dx\\
  &~~~-\int_{D} \bigg(\int_0^t e^{-(t-s)A}[g(u(s)-g(u_{n}(s))]ds[g(u(t)-g(u_{n}(t))]\bigg)dx\\
  &\leq \int_{D} \zeta(t)[g(u(t)-g(u_{n}(t))]dx-C_{p}\\
  &~~~\int_{D} \bigg(\int_0^t e^{-(t-s)A}[g(u(s)-g(u_{n}(s))]ds\bigg)^{2}dx\\
  &=\int_{D} \zeta(t)[g(u(t)-g(u_{n}(t))]dx-C_{p}\int_{D}[u-u_{n}-\zeta]^{2}dx,
  \end{split}
  \end{equation}
  i.e.,
  \begin{equation}
  -L\|u-u_{n}\|_{L^{2}(D)}^{2}\leq\int_{D} \zeta(t)[g(u(t)-g(u_{n}(t))]dx-C_{p}\int_{D}[u-u_{n}-\zeta]^{2}dx.
  \end{equation}
  Then, by Assumption \ref{a3} and Cauchy-Schwarz inequality, we have
  \begin{equation}
  (C_{p}-L)\|u-u_{n}\|_{L^{2}(D)}^{2}\leq (C_{p}+R)\|u-u_{n}\|_{L^{2}(D)}\|\zeta\|_{L^{2}(D)}+2M\|\zeta\|_{L^{2}(D)}.
  \end{equation}  
  Since $C_{p}-L\geq 0$, it implies that 
  \begin{equation}
  \|u-u_{n}\|_{L^{2}(D)}^{2}\leq C[M\|\zeta\|_{L^{2}(D)}+(C_{p}+R)^{2}\|\zeta\|_{L^{2}(D)}^{2}].
  \end{equation}
  Taking expectation on both sides, we have
  \begin{equation}
    \mathbb{E}\|u-u_{n}\|_{L^{2}(D)}^{2}\leq C[M \mathbb{E}\|\zeta\|_{L^{2}(D)}+(C_{p}+R)^{2} \mathbb{E}\|\zeta\|_{L^{2}(D)}^{2}],
  \end{equation}
where the constant $C$ depends only on $C_{p}$, $L$ and $M$, $R$.
  
The proof is completed.
\end{proof}
\end{theorem}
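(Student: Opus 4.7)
The plan is to derive the error identity directly from the two mild-solution representations and then exploit the one-sided Lipschitz property from Assumption \ref{a3}(1) by testing against $g(u)-g(u_n)$. First I would subtract (\ref{Equ:unmild}) from (\ref{Equ:umild}); the initial data, deterministic forcing and semigroup factors cancel exactly, leaving
\begin{equation*}
u(t)-u_n(t) = \zeta(t) - \int_0^t e^{-(t-s)A}\bigl[g(u(s))-g(u_n(s))\bigr]\,ds,
\end{equation*}
where $\zeta(t)$ is the stochastic-convolution difference controlled by Lemma \ref{thmzeta}.

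Next I would multiply this identity pointwise by $g(u(t))-g(u_n(t))$ and integrate over $D$. On the left-hand side, Assumption \ref{a3}(1) gives the key lower bound $\int_D (u-u_n)(g(u)-g(u_n))\,dx \geq -L\|u-u_n\|_{L^2(D)}^2$. The first right-hand term $\int_D \zeta(t)(g(u)-g(u_n))\,dx$ I would keep as a source contribution. For the memory term $\int_D \bigl(\int_0^t e^{-(t-s)A}[g(u)-g(u_n)]\,ds\bigr)(g(u)-g(u_n))(t)\,dx$, I would use the structural identity $u-u_n-\zeta = -\int_0^t e^{-(t-s)A}[g(u)-g(u_n)]\,ds$ together with coercivity of $A$ and the Poincar\'e inequality (\ref{assumption:cp}) to extract a contribution of the form $-C_p \int_D (u-u_n-\zeta)^2 \,dx$, which is the step that the claim $-C_p \int_D [u-u_n-\zeta]^2 dx$ in the statement relies on.

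Third, I would rearrange the resulting inequality to move $-C_p\|u-u_n-\zeta\|_{L^2(D)}^2$ to the left, expand the square, and absorb the cross terms using Cauchy--Schwarz together with Assumption \ref{a3}(2), namely $|g(u)-g(u_n)|\le M + R|u-u_n|$, to obtain
\begin{equation*}
(C_p - L)\|u-u_n\|_{L^2(D)}^2 \;\le\; (C_p+R)\|u-u_n\|_{L^2(D)}\|\zeta\|_{L^2(D)} + 2M\|\zeta\|_{L^2(D)}.
\end{equation*}
Since the assumption $L < C_p$ makes $C_p - L$ strictly positive, applying Young's inequality to the first right-hand term lets me isolate $\|u-u_n\|_{L^2(D)}^2$ and bound it by $C[M\|\zeta\|_{L^2(D)} + (C_p+R)^2\|\zeta\|_{L^2(D)}^2]$. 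Taking expectations and invoking Lemma \ref{thmzeta} to translate $\mathbb{E}\|\zeta\|^2_{L^2(D)}$ into the noise-truncation seminorm completes the proof.

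The main obstacle is the memory-term step: bounding the cross product of the deterministic semigroup convolution against $g(u)-g(u_n)$ by $-C_p\|u-u_n-\zeta\|_{L^2(D)}^2$. A naive bound on $\|e^{-(t-s)A}\phi\|_{L^2}$ would only yield a Gr\"onwall-type constant depending on $T$, so the argument has to use the identification $u-u_n-\zeta$ as the state of an auxiliary parabolic problem driven by $-(g(u)-g(u_n))$ and exploit the coercivity $\|\nabla v\|_{L^2}^2 \ge C_p\|v\|_{L^2}^2$ of $A$, rather than a pointwise-in-time semigroup estimate. Everything else is a careful but routine application of Cauchy--Schwarz, Young's inequality and expectation.
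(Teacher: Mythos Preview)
Your proposal follows the paper's proof essentially step for step: the same mild-solution subtraction, the same testing against $g(u)-g(u_n)$, the same use of Assumption~\ref{a3}(1) and the identification $u-u_n-\zeta=-\int_0^t e^{-(t-s)A}[g(u)-g(u_n)]\,ds$ to extract $-C_p\|u-u_n-\zeta\|_{L^2(D)}^2$, and the same Cauchy--Schwarz/Young finish. If anything, you are more explicit than the paper about why the memory-term step requires the coercivity of $A$ rather than a crude semigroup bound.
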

The above theorem shows that $u_{n}$ indeed approximates $u$, the solution of (\ref{Equ:umild}). 
\subsection{Error estimation for the multiscale method}
\label{The convergence analysis:un-ums}

\quad\quad The existence and uniqueness of the weak solution under the Assumption \ref{a1}-\ref{a3} had been proved in Ref. \cite{HC:MMS:2014}, i.e., there exists a unique weak solution $u(x, t) \in L^{2}(D)$ such that $\underset{t\in [0,T]}{\max} \mathbb{E} \|u(t)\|_{L^{2}(D)}^{2}<+\infty $, and for $v\in H^1_{0}(D)$, $t\in [0,T]$, it follows 
\begin{equation} \small
  (u(t),v)=  (u(0),v)-\int_0^t (\kappa \nabla u(s), \nabla v)ds +\int_0^t(f,v)ds+ \int_0^t (g(u(s)),v)ds+\int_0^t(\dot{W}(s),v)ds,
\end{equation}
where 
\begin{equation}
  \int_0^t(\dot{W}(s),v)ds:=\int_0^t (\sum_{k=1}^{\infty}\gamma_{k}\eta_{k}\chi_{k}(s),v)ds.
\end{equation}
\begin{definition}(Variational solution)
The variational solution $u(x,t)\in H^1_{0}(D)$ of the stochastic partial differential equation (\ref{u}) is defined by
 \begin{equation}\label{eq-uweak}
 \left\{
  \begin{aligned}
 (u_t,v) + a(u,v)& = (f,v)-(g(u),v)+(\dot{W},v
 ) \quad \forall v\in H^1_{0}(D), \, t> 0,\\
 \big(u(x,0), v\big)&=(u_{0},v) \quad \forall v\in H^1_{0}(D),
 \end{aligned}
    \right.
 \end{equation}
 where the bilinear form $a(u, v)$ is defined as follows
 \begin{equation}
 a(u,v) = \int_{D} \kappa \nabla u \nabla v\quad \forall~~u, v\in H^1_{0}(D).
 \end{equation}
 We use $(\cdot, \cdot)$ to denote the $L^{2}$ inner product, i.e.,
 \begin{equation}
 (f,v)=\int_{D}fv, \quad (g(u),v) = \int_{D} g(u)v, \quad (\dot{W},v)=\int_{D}\dot{W}v.
 \end{equation}
\end{definition}

Following the above procedure, the variational solution of problem (\ref{equ:un}) is to find $u_{n}(x,t) \in H^1_{0}(D)$ such that
  \begin{equation}\label{eq-uapproweak}
    \left\{
     \begin{aligned}
    ((u_{n})_t,v) + a(u_{n},v)& = (f,v)-(g(u_{n}),v)+(\dot{W}_{n},v
    ) \quad \forall v\in H^1_{0}(D), \, t> 0,\\
    \big(u_{n}(x,0), v\big)&=(u_{0},v) \quad \forall v\in H^1_{0}(D),
    \end{aligned}
       \right.
    \end{equation}
where $\dot{W}_{n}$ is the approximation of $\dot{W}$.

\textcolor{black}{We now consider a multiscale approximation of $u_{n}$. 
The multiscale solution of \ref{eq-uapproweak} is find $u_{n}^{ms}(x,t)\in V_{ms}$ such that}
\begin{equation}\label{eq-umsweak}
\left\{
 \begin{aligned}
\big((u_{n}^{ms})_t,v\big) + a(u_{n}^{ms},v)& = (f,v)-(g(u_{n}^{ms}),v)+(\dot{W}_{n},v) \quad \forall v\in V_{ms},\, t> 0,\\
\big(u_{n}^{ms}(x,0), v\big)&=(u_{0,ms},v) \quad \forall v\in V_{ms}.
\end{aligned}
   \right.
\end{equation}
where $u_{0,ms}\in V_{ms}$ is the $L^2$ projection of $u_0$ in $V_{ms}$.

To analyze the convergence of the proposed semi-implicit stochastic multiscale method for radiative heat transfer problem driven by the additive noises, we need  the following lemma.
     \begin{lemma}\label{lemw} 
      If the approximated white noise  $\dot{W}_{n}(t)=\sum_{k=1}^{\infty}\gamma_{k}^{n}\eta_{k}\psi_{k}(t)$, then
    \begin{equation}
      \mathbb{E}\int_0^T \| \dot{W}_{n}(t)\|_{L^{2}(D)}^{2}dt\leq C \sum_{k=1}^{\infty}(\gamma_{k}^{n})^{2},
    \end{equation}
    provided that the right-hand side is convergent.
    \end{lemma}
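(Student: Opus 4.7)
The plan is to compute the expectation directly by expanding the squared norm, exchanging sum, integral, and expectation, and then invoking (i) the orthonormality of $\{\psi_k\}$ in $L^2([0,T])$, and (ii) the independence and unit variance of the Gaussians $\{\eta_k\}$ encoded in \eqref{eq:qkl}.

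First I would note that $\dot W_n(t)=\sum_{k=1}^\infty \gamma_k^n \eta_k \psi_k(t)$ depends only on $t$, so $\|\dot W_n(t)\|_{L^2(D)}^2 = |D|\,|\dot W_n(t)|^2$. Thus
\begin{equation*}
\mathbb{E}\int_0^T \|\dot W_n(t)\|_{L^2(D)}^2\,dt \;=\; |D|\,\mathbb{E}\int_0^T |\dot W_n(t)|^2\,dt.
\end{equation*}
Expanding the square yields the formal double series
\begin{equation*}
|\dot W_n(t)|^2 \;=\; \sum_{k,l=1}^{\infty}\gamma_k^n\gamma_l^n\,\eta_k\eta_l\,\psi_k(t)\psi_l(t).
\end{equation*}

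Next I would take expectation first, using $\mathbb{E}[\eta_k\eta_l]=q_{kl}=\delta_{kl}$ from \eqref{eq:qkl}, so that the cross terms vanish and only the diagonal survives. This gives
\begin{equation*}
\mathbb{E}|\dot W_n(t)|^2 \;=\; \sum_{k=1}^{\infty}(\gamma_k^n)^2\,\psi_k(t)^2.
\end{equation*}
Since the integrand is non-negative, Tonelli's theorem justifies interchanging the time integral and the series, and the orthonormality $\int_0^T \psi_k(t)\psi_l(t)\,dt=\delta_{kl}$ collapses each diagonal term to $1$, yielding
\begin{equation*}
\mathbb{E}\int_0^T |\dot W_n(t)|^2\,dt \;=\; \sum_{k=1}^{\infty}(\gamma_k^n)^2.
\end{equation*}
Multiplying by $|D|$ and setting $C=|D|$ completes the bound (in fact an equality).

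The argument is essentially a Parseval/Fubini computation, so there is no substantive obstacle; the only point requiring a moment of care is the interchange of the infinite series with $\mathbb{E}$ and with $\int_0^T\!\cdot\,dt$. This is legitimate because, after taking expectation, the integrand is a sum of non-negative terms, and the hypothesis that $\sum_k (\gamma_k^n)^2$ is convergent ensures the right-hand side is finite, so Tonelli applies without further assumption on the tails of $\{\psi_k\}$.
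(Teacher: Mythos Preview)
The paper states this lemma without proof, so there is no argument to compare against. Your computation is correct and is precisely the natural one: expand the square, use $\mathbb{E}[\eta_k\eta_l]=\delta_{kl}$ from \eqref{eq:qkl} to eliminate cross terms, then integrate in $t$ and use the $L^2([0,T])$-normalization $\int_0^T\psi_k(t)^2\,dt=1$. Note also that by the paper's definition $\gamma_k^n=0$ for $k>n$, so the series is actually a finite sum and no Tonelli/Fubini justification is needed at all; your bound is in fact an equality with $C=|D|$.
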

    
    The following lemma gives a desired bound for the solution of the parabolic equation (\ref{eq-uweak}).
    \begin{lemma}\label{lemu}
    Under Assumptions \ref{a1}-\ref{a3} and Lemma \ref{lemw}, the solution $u_{n}$ of the parabolic equation (\ref{equ:un}) satisfies the following estimates:\textcolor{black}{
      \begin{equation}
      \mathbb{E} \sup_{0\leq t\leq T}\|u_{n}\|_{L^{2}(D)}^2\leq C_{1} exp \int_0^T 2Ldt,
 \end{equation}
    }
    where $C_{1}=C(\|u_{0}\|_{L^{2}(D)}^2+\int_0^T \|f\|_{L^{2}(D)}^2 dt+\sum_{k=1}^{\infty}(\gamma_{k}^{n})^{2})$ is a constant independent of $\kappa(x)$ and coarse mesh size $H$.
    \begin{proof}
    In Eq.(\ref{eq-uapproweak}), taking $v=u_{n}$ and modulo appropriate stopping times:
    \begin{equation}
    ((u_{n})_{t},u_{n}) + a(u_{n},u_{n}) =(f,u_{n})-(g(u_{n}),u_{n})+(\dot{W}_{n},u_{n}),
    \end{equation}
    which implies
    \begin{equation}
    d\|u_{n}\|^2+ 2\|u_{n}\|_a^2 dt =2(f,u_{n})dt-2(g(u_{n}),u_{n})dt+2(\dot{W}_{n},u_{n})dt.
    \end{equation}
    Integrating with respect to time, we obtain
    \begin{equation}\label{Equ:un} 
    \begin{split}
    \|u_{n}\|_{L^{2}(D)}^2+2\int_0^T \|u_{n}\|_a^2 dt& =\|u_{0}\|_{L^{2}(D)}^2+2\int_0^T(f,u_{n})dt\\
    &-2\int_0^T(g(u_{n}),u_{n})dt+2\int_0^T(\dot{W}_{n},u_{n})dt.
    \end{split}
    \end{equation}
    Using the Assumptions \ref{a3}, we have
    \begin{equation}\label{Equ:gun}
    - \mathbb{E}\int_0^T (g(u_{n}),u_{n})dt \leq  L \mathbb{E} \int_0^T \|u_{n}\|_{L^{2}(D)}^2dt.
    \end{equation}
    Thanks to the Cauchy-Schwarz and Young's inequality, it follows that
    \begin{equation}\label{Equ:fun}
    \begin{split}
      \mathbb{E}\int_0^T (f,u_{n})dt& \leq  \mathbb{E}\int_0^T \|f\|_{L^{2}(D)}\|u_{n}\|_{L^{2}(D)}dt\\
    &\leq   \mathbb{E} \sup_{0\leq t\leq T}\|u_{n}(t)\|_{L^{2}(D)}\cdot\int_0^T \|f\|_{L^{2}(D)}dt\\
    &\leq \beta  \mathbb{E} \sup_{0\leq t\leq T}\|u_{n}(t)\|_{L^{2}(D)}^{2} +c(\beta )T\int_0^T \|f\|_{L^{2}(D)}^2dt.
    \end{split}
    \end{equation}
    Analogous to (\ref{Equ:fun}), we have
    \begin{equation}\label{Equ:Wn}
      \mathbb{E}\int_0^T (\dot{W}_{n},u_{n})dt \leq \delta  \mathbb{E} \sup_{0\leq t\leq T}\|u_{n}(t)\|_{L^{2}(D)}^{2} + c(\delta) \mathbb{E}\int_0^T\|\dot{W}_{n}\|_{L^{2}(D)}^2dt.
    \end{equation}
    Combining with Lemma \ref{lemw} and Eqs.(\ref{Equ:gun})-(\ref{Equ:Wn}), we get
    \begin{equation}
      \mathbb{E} \sup_{0\leq t\leq T}\|u_{n}\|_{L^{2}(D)}^2+2 \mathbb{E} \sup_{0\leq t\leq T}\int_0^T \|u_{n}\|_a^2dt\leq C_{1}+2L \mathbb{E}\int_0^T \|u_{n}\|_{L^{2}(D)}^2dt,
    \end{equation}
    where
    \begin{equation}
      C_{1}=C(\|u_{0}\|_{L^{2}(D)}^2+\int_0^T \|f\|_{L^{2}(D)}^2dt+\sum_{k=1}^{\infty}(\gamma_{k}^{n})^{2}).
    \end{equation}
   Noticed that
    \begin{equation}
     \mathbb{E} \sup_{0\leq t\leq T}\int_0^T \|u_{n}\|_a^2dt\geq 0.
    \end{equation}
    By the $Gronwall's$ inequality, we have\textcolor{black}{
   \begin{equation}
      \mathbb{E} \sup_{0\leq t\leq T}\|u_{n}\|_{L^{2}(D)}^2\leq C_{1} exp \int_0^T 2Ldt.
 \end{equation}
 }
    This completes the proof.
    \end{proof}
    \end{lemma}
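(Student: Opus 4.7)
The natural approach is an energy estimate followed by Gronwall. First I would take $v = u_n$ in the variational formulation \eqref{eq-uapproweak}, producing the identity
\begin{equation*}
\tfrac{1}{2}\tfrac{d}{dt}\|u_n\|_{L^2(D)}^2 + \|u_n\|_a^2 = (f, u_n) - (g(u_n), u_n) + (\dot W_n, u_n).
\end{equation*}
Integrating from $0$ to an arbitrary time in $[0,T]$ and taking the supremum before the expectation will give the left-hand side $\mathbb{E}\sup_{0\le t\le T}\|u_n\|_{L^2(D)}^2$ that we want to control, together with a nonnegative energy term $2\mathbb{E}\sup_{0\le t\le T}\int_0^T\|u_n\|_a^2\,dt$ that can be discarded on the right.

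Next I would bound the three terms on the right-hand side separately. For the nonlinear term, I would invoke Assumption \ref{a3}(1) with the pair $(u_n, 0)$: since $g(u) = \sigma(x)u^4$ vanishes at $0$, this yields $(g(u_n), u_n)\ge -L\|u_n\|_{L^2(D)}^2$, which provides the key factor $2L$ that will eventually appear in the exponential. For the source and noise terms, Cauchy--Schwarz followed by Young's inequality with small parameters $\beta, \delta$ produces
\begin{equation*}
2\mathbb{E}\int_0^T(f,u_n)\,dt \le 2\beta\,\mathbb{E}\sup_{0\le t\le T}\|u_n\|_{L^2(D)}^2 + c(\beta)\,T\int_0^T\|f\|_{L^2(D)}^2\,dt,
\end{equation*}
and an analogous bound for the noise integral, where the final $\mathbb{E}\int_0^T\|\dot W_n\|_{L^2(D)}^2\,dt$ is controlled by $C\sum_{k=1}^\infty(\gamma_k^n)^2$ thanks to Lemma \ref{lemw}.

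With $\beta$ and $\delta$ chosen small enough that $2(\beta+\delta)<1$, the $\sup$ terms from the source and noise bounds can be absorbed into the left-hand side, leaving
\begin{equation*}
\mathbb{E}\sup_{0\le t\le T}\|u_n\|_{L^2(D)}^2 \le C_1 + 2L\,\mathbb{E}\int_0^T\|u_n\|_{L^2(D)}^2\,dt
\end{equation*}
with $C_1 = C\bigl(\|u_0\|_{L^2(D)}^2 + \int_0^T\|f\|_{L^2(D)}^2\,dt + \sum_{k=1}^\infty(\gamma_k^n)^2\bigr)$. A standard application of Gronwall's inequality to the function $t\mapsto \mathbb{E}\sup_{0\le s\le t}\|u_n(s)\|_{L^2(D)}^2$ then produces the claimed bound $C_1\exp\int_0^T 2L\,dt$, with a constant independent of $\kappa$ and $H$ because no constant in the estimates above sees the multiscale coefficients beyond the coercivity used implicitly when dropping the positive energy term.

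The main obstacle is the stochastic term $(\dot W_n, u_n)$: using a plain Cauchy--Schwarz--Young estimate as in \eqref{Equ:Wn} only works because $\dot W_n$ is the truncated (finite-dimensional, square-integrable in $t$) noise, so that $\mathbb{E}\int_0^T\|\dot W_n\|_{L^2(D)}^2\,dt$ is finite via Lemma \ref{lemw}; for the full infinite-dimensional white noise this quantity would diverge and one would need an Itô-isometry/martingale argument instead. One must also be careful that the supremum is taken before (not after) applying Young's inequality to the source and noise terms, so that the absorbed supremum on the right genuinely dominates the one on the left; this is what forces the use of $\sup_{0\le t\le T}\|u_n\|_{L^2(D)}$ as the multiplicative factor in the Cauchy--Schwarz step rather than $\|u_n(t)\|_{L^2(D)}$ pointwise.
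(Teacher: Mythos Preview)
Your proposal is correct and follows essentially the same approach as the paper's own proof: test with $v=u_n$, integrate in time, bound the nonlinear term via Assumption~\ref{a3}(1), the source and truncated-noise terms via Cauchy--Schwarz/Young with small parameters, invoke Lemma~\ref{lemw}, absorb, and apply Gronwall. Your additional remarks on why the truncated noise is essential and on the ordering of the supremum are valid clarifications that the paper leaves implicit.
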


    \begin{remark}\label{lemprojection} 
    Lemma \ref{lemu} implies that the function $g(u_{n})$ is also bounded according to the Assumptions \ref{a3}, i.e.,
      \begin{equation}
        \mathbb{E}\int_0^T \|g(u_{n})\|_{L^{2}(D)}^{2}dt \leq C*C_{1}exp \int_0^T 2Ldt.
      \end{equation}
    \end{remark}  

    \begin{lemma}\label{lemunt}
     Let $u_{n}$ be the solution of the parabolic equation (\ref{equ:un}). Under the conditions of Lemma \ref{lemw} and Lemma \ref{lemu}, we have
     \begin{equation}
      \mathbb{E} \int_0^T\|(u_{n})_{t}\|_{L^{2}(D)}^{2}dt\leq C_{2}
    \end{equation}
    for a constant $C_{2}=C(\|u_{0}\|_{L^{2}(D)}^2+\|u_{0}\|_a^{2}+\int_{0}^{T} \|f\|_{L^{2}(D)}^2dt+\sum_{k=1}^{\infty}(\gamma_{k}^{n})^{2})$ independent of $\kappa(x)$ and coarse mesh size $H$.
    \begin{proof}
   Taking $v=(u_{n})_{t}$ in Eq.(\ref{eq-uapproweak}), we get
    \begin{equation}
    ((u_{n})_{t},(u_{n})_{t}) + a(u_{n},(u_{n})_{t}) =(f,(u_{n})_{t})-(g(u_{n}),(u_{n})_{t})+(\dot{W}_{n},(u_{n})_{t}).
    \end{equation}
    Integrating with respect to time
    \begin{equation}\label{equntl2d} 
     \begin{split}
    \int_0^T \|(u_{n})_{t}\|_{L^{2}(D)}^2dt + \frac{1}{2} \|u_{n}\|_a^2 &= \frac{1}{2} \|u_0\|_a^2+ \int_0^T (f,(u_{n})_{t})dt\\
    &-\int_0^T(g(u_{n}),(u_{n})_{t})dt+\int_0^T (\dot{W}_{n},(u_{n})_{t})dt.
     \end{split}
     \end{equation}
    Thanks to the Assumptions \ref{a3} and the Cauchy-Schwarz and Young's inequality, we have
    \textcolor{black}{
    \begin{equation} 
       \begin{split}
      \mathbb{E} \int_0^T\|(u_{n})_{t}\|_{L^{2}(D)}^{2}dt+\frac{1}{2} \mathbb{E}\|u_{n}\|_a^2 &\leq C\bigg(\frac{1}{2}\|u_{0}\|_a^{2}+ \mathbb{E}\int_0^T\|f\|_{L^{2}(D)}^{2}dt\\
      &+ \mathbb{E}\int_0^T\|\dot{W}_{n}\|_{L^{2}(D)}^2dt+\mathbb{E}\sup_{0\leq t\leq T}\|u_{n}\|_{L^{2}(D)}^2\bigg).
     \end{split}
    \end{equation} } 
  Rely on Lemma \ref{lemw} and Lemma \ref{lemu}, we have
    \begin{equation}
      \mathbb{E} \int_0^T\|(u_{n})_{t}\|_{L^{2}(D)}^{2}dt+\frac{1}{2}\mathbb{E}\|u_{n}\|_a^2\leq C_{2},
    \end{equation}
    where
    \begin{equation}
      C_{2}=C(\|u_{0}\|_{L^{2}(D)}^2+\|u_{0}\|_a^{2}+\int_0^T \|f\|_{L^{2}(D)}^2dt+\sum_{k=1}^{\infty}(\gamma_{k}^{n})^{2}).
    \end{equation}
    {Since} $\mathbb{E}\|u_{n}\|_a^2\geq 0$, it holds that
     \begin{equation}
      \mathbb{E} \int_0^T\|(u_{n})_{t}\|_{L^{2}(D)}^{2}dt\leq C_{2}.
    \end{equation}
   This completes the proof.
    \end{proof}
    \end{lemma}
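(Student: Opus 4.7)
The plan is to test the variational formulation (\ref{eq-uapproweak}) for $u_n$ against $v = (u_n)_t$, which is admissible because the truncated noise $\dot{W}_n$ is regular enough in time to ensure $(u_n)_t \in L^2(D)$ pointwise. Doing so yields
\begin{equation*}
\|(u_n)_t\|_{L^2(D)}^2 + a(u_n,(u_n)_t) = (f,(u_n)_t) - (g(u_n),(u_n)_t) + (\dot{W}_n,(u_n)_t).
\end{equation*}
The bilinear form $a(\cdot,\cdot)$ induced by $\kappa(x)$ is symmetric, so $a(u_n,(u_n)_t) = \tfrac{1}{2}\tfrac{d}{dt}\|u_n\|_a^2$, which is exactly the telescoping structure needed to integrate cleanly over $[0,T]$; this produces precisely the identity (\ref{equntl2d}) written in the excerpt.

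Next, I would bound each term on the right-hand side by Cauchy--Schwarz followed by Young's inequality with a small parameter $\varepsilon$, so that every instance of $\|(u_n)_t\|_{L^2(D)}$ appears squared with a coefficient that can be absorbed into the leading $\int_0^T\|(u_n)_t\|_{L^2(D)}^2\,dt$ term on the left. The growth estimate in Assumption \ref{a3}(2) gives $\|g(u_n)\|_{L^2(D)}^2 \leq C(1 + \|u_n\|_{L^2(D)}^2)$, so after choosing $\varepsilon$ small and rearranging, I would be left with a bound of the form
\begin{equation*}
\mathbb{E}\int_0^T \|(u_n)_t\|_{L^2(D)}^2\,dt + \tfrac{1}{2}\mathbb{E}\|u_n(T)\|_a^2 \leq C\Bigl(\|u_0\|_a^2 + \mathbb{E}\!\int_0^T\|f\|_{L^2(D)}^2\,dt + \mathbb{E}\!\int_0^T\|\dot{W}_n\|_{L^2(D)}^2\,dt + \mathbb{E}\!\sup_{[0,T]}\|u_n\|_{L^2(D)}^2\Bigr).
\end{equation*}

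To conclude, I invoke Lemma \ref{lemw} to control $\mathbb{E}\int_0^T\|\dot{W}_n\|_{L^2(D)}^2\,dt$ by $C\sum_{k=1}^\infty (\gamma_k^n)^2$, and Lemma \ref{lemu} (together with Remark \ref{lemprojection}) to control $\mathbb{E}\sup_{[0,T]}\|u_n\|_{L^2(D)}^2$ in terms of the same data. Collecting these bounds absorbs everything into the advertised constant $C_2 = C(\|u_0\|_{L^2(D)}^2 + \|u_0\|_a^2 + \int_0^T\|f\|_{L^2(D)}^2\,dt + \sum_{k=1}^\infty (\gamma_k^n)^2)$; dropping the nonnegative term $\tfrac{1}{2}\mathbb{E}\|u_n(T)\|_a^2$ yields the statement.

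The main obstacle I expect is the nonlinear radiation term $(g(u_n),(u_n)_t)$. Because $g$ is not Lipschitz globally (only the weak one-sided condition in Assumption \ref{a3}(1) and the affine growth in Assumption \ref{a3}(2) are available), I cannot simply differentiate through $g$ or use a chain-rule identity; I must rely on the crude $L^2$ bound $\|g(u_n)\|_{L^2(D)}^2 \leq C(1 + \|u_n\|_{L^2(D)}^2)$ and then defer to Lemma \ref{lemu} for the resulting $\mathbb{E}\sup_{[0,T]}\|u_n\|_{L^2(D)}^2$ factor. This is the only place where the previously established a priori energy estimate is genuinely indispensable; every other step is a standard Young/Cauchy--Schwarz absorption argument.
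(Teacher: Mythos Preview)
Your proposal is correct and follows essentially the same route as the paper: test the weak formulation with $v=(u_n)_t$, use the symmetry of $a(\cdot,\cdot)$ to rewrite $a(u_n,(u_n)_t)=\tfrac12\tfrac{d}{dt}\|u_n\|_a^2$, integrate in time, absorb the $\|(u_n)_t\|_{L^2(D)}$ factors via Cauchy--Schwarz/Young, and then invoke Lemmas \ref{lemw} and \ref{lemu} to control the noise and the $\sup_t\|u_n\|_{L^2(D)}^2$ contributions before discarding the nonnegative energy term. Your treatment of the nonlinear term via the affine growth bound in Assumption \ref{a3}(2) is exactly what the paper does (cf.\ Remark \ref{lemprojection}).
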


  Similar to the Lemma \ref{lemunt}, the CEM-GMsFEM solutions $u_{n}^{ms}$ satisfy the following Lemma.
  \begin{lemma}\label{lemunmst}
  Let $u_{n}^{ms}$ be the CEM-GMsFEM solution of the parabolic equation (\ref{eq-umsweak}). Under the conditions of Lemma \ref{lemw} and Lemma \ref{lemu}, we have
  \begin{equation}
    \mathbb{E} \int_0^T\|(u_{n}^{ms})_{t}\|_{L^{2}(D)}^{2}dt\leq C_{3}
   \end{equation}
   for a constant $C_{3}=C(\|u_{0,ms}\|_{L^{2}(D)}^2+\|u_{0,ms}\|_a^{2}+\int_0^T \|f\|_{L^{2}(D)}^2dt+\sum_{k=1}^{\infty}(\gamma_{k}^{n})^{2})$ independent of $\kappa(x)$ and coarse mesh size $H$.
  \end{lemma}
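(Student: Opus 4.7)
The plan is to mirror the argument of Lemma \ref{lemunt}, now carried out inside the multiscale subspace $V_{ms}\subset H^1_0(D)$, so that the same manipulations apply verbatim with $u_n$ replaced by $u_n^{ms}$ and $u_0$ replaced by its $L^2$-projection $u_{0,ms}$. The key point is that $(u_n^{ms})_t\in V_{ms}$, so it is an admissible test function in the variational formulation (\ref{eq-umsweak}).

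First I would take $v=(u_n^{ms})_t$ in (\ref{eq-umsweak}) to obtain
\begin{equation}
\|(u_n^{ms})_t\|_{L^2(D)}^2+\tfrac{1}{2}\tfrac{d}{dt}\|u_n^{ms}\|_a^2=(f,(u_n^{ms})_t)-(g(u_n^{ms}),(u_n^{ms})_t)+(\dot{W}_n,(u_n^{ms})_t),
\end{equation}
using the identity $a(u_n^{ms},(u_n^{ms})_t)=\tfrac{1}{2}\tfrac{d}{dt}\|u_n^{ms}\|_a^2$. Integrating in $t$ over $[0,T]$ produces the exact analogue of (\ref{equntl2d}) with $u_0$ replaced by $u_{0,ms}$.

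Next I would handle each term on the right-hand side in the same way as in Lemma \ref{lemunt}: apply Cauchy--Schwarz and Young's inequality with a small absorbing parameter to $(f,(u_n^{ms})_t)$ and $(\dot{W}_n,(u_n^{ms})_t)$ so that a fraction of $\|(u_n^{ms})_t\|_{L^2(D)}^2$ can be absorbed into the left-hand side, and for $(g(u_n^{ms}),(u_n^{ms})_t)$ use Assumption \ref{a3}(2), Young's inequality, and a multiscale analogue of Lemma \ref{lemu} (obtained by choosing $v=u_n^{ms}$ in (\ref{eq-umsweak}) and running the same Gr\"onwall argument, which gives $\mathbb{E}\sup_{[0,T]}\|u_n^{ms}\|_{L^2(D)}^2$ bounded in terms of $\|u_{0,ms}\|_{L^2(D)}^2$, $\int_0^T\|f\|_{L^2(D)}^2dt$, and $\sum_k(\gamma_k^n)^2$). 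Taking expectation and applying Lemma \ref{lemw} yields
\begin{equation}
\mathbb{E}\int_0^T\|(u_n^{ms})_t\|_{L^2(D)}^2dt+\tfrac{1}{2}\mathbb{E}\|u_n^{ms}\|_a^2\leq C_3,
\end{equation}
and dropping the nonnegative energy term delivers the desired inequality.

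The only nontrivial point, and hence where I would be most careful, is the preliminary estimate $\mathbb{E}\sup_{[0,T]}\|u_n^{ms}\|_{L^2(D)}^2\leq C\,\exp(2LT)$: one must verify that the Poincar\'e-type control and Gr\"onwall step used for Lemma \ref{lemu} carry over to $V_{ms}$. This is immediate because $V_{ms}\subset H^1_0(D)$ inherits the Poincar\'e constant $C_p$ and Assumption \ref{a3} is pointwise, so no constant depending on $H$ or on the contrast of $\kappa$ is introduced. Everything else is a routine replication of the earlier proof.
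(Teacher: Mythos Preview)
Your proposal is correct and follows exactly the approach the paper indicates: the paper simply states that Lemma \ref{lemunmst} is ``similar to the Lemma \ref{lemunt}'' without giving details, and your sketch supplies precisely those details by testing (\ref{eq-umsweak}) with $(u_n^{ms})_t\in V_{ms}$ and invoking the obvious $V_{ms}$-analogue of Lemma \ref{lemu}. Your observation that $V_{ms}\subset H_0^1(D)$ guarantees the Poincar\'e constant and the pointwise Assumption \ref{a3} carry over unchanged is exactly the reason no $H$- or contrast-dependent constants enter.
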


In order to estimate the error bound, we introduce the elliptic projection $\widehat{u}$, which bridges the approximation solution $u_{n}$ and the CEM-GMsFEM solutions $u_{n}^{ms}$.
\begin{lemma}
  \label{lem:u-uauxilary}
  Let $u_{n}$ be the solution of (\ref{equ:un}), and define its elliptic projection $\widehat{u}\in V_{ms}$ by 
  \begin{equation}\label{ms-ell}
    a\big(u_{n}-\widehat{u},v\big) =0, \quad \forall v\in V_{ms}.
  \end{equation}
  Then, by virtue of Lemma 1 in \cite{CEL:CMAME:2018}, we have
  \begin{equation}\label{ms-a-err}
    \mathbb{E}\int_0^T\| u_{n}-\widehat{u}\|_{a}^2dt\leq C_{4}H^{2}\Lambda^{-1}\kappa_{0}^{-1}.
   \end{equation}
   In particulal, we can derive that
   \begin{equation}\label{ms-l2-err}
    \mathbb{E}\int_0^T\| u_{n}-\widehat{u}\|_{L^{2}(D)}^{2}dt \leq C_{4}H^{4}\Lambda^{-2}\kappa_{0}^{-2},
  \end{equation}
where $\displaystyle \Lambda=\min_{1\leq i\leq N}\lambda_{L_{i}+1}^{(i)}$ and $\{ \lambda_{j}^{(i)}\} $ are the eigenvalues obtained from the construction of multiscale basis functions (\ref{cem-auxproblem}). Here $C_{4}=C(\|u_{0}\|_{L^{2}(D)}^2+\|u_{0}\|_a^{2}+\int_{0}^{T}\|f\|_{L^{2}(D)}^{2}dt+\sum_{k=1}^{\infty}(\sigma_{k}^{n})^{2})$ is a constant independent of $\kappa$ and coarse mesh size $H$.
\end{lemma}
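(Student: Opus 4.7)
The plan is to reduce the estimate to the deterministic CEM-GMsFEM elliptic a priori bound (Lemma 1 of \cite{CEL:CMAME:2018}) applied realisation-wise and pointwise in time to $u_n(\cdot,t)$, then integrate in time and take expectation, absorbing the resulting right-hand side into the bounds already established by Lemma \ref{lemw}, Lemma \ref{lemunt} and Remark \ref{lemprojection}.

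\textbf{Step 1 (rewriting the elliptic projection as a CEM-GMsFEM solution).} By definition, $\widehat u(\cdot,t)\in V_{ms}$ satisfies $a(u_n-\widehat u,v)=0$ for every $v\in V_{ms}$. Hence for each fixed $(\omega,t)$, the function $\widehat u(\cdot,t)$ is exactly the CEM-GMsFEM approximation, in $V_{ms}$, of the solution of the elliptic problem $-\nabla\!\cdot\!(\kappa\nabla w)=\phi$ with data $\phi:=f+\dot W_n-g(u_n)-(u_n)_t\in L^2(D)$, as is seen by testing the variational form (\ref{eq-uapproweak}) against $v\in V_{ms}\subset H_0^1(D)$. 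Lemma 1 of \cite{CEL:CMAME:2018} then gives the pointwise-in-time bound
\begin{equation*}
\|u_n-\widehat u\|_a^2 \;\leq\; C\,H^2\Lambda^{-1}\kappa_0^{-1}\,\|\phi\|_{L^2(D)}^2
\;\leq\; C\,H^2\Lambda^{-1}\kappa_0^{-1}\bigl(\|f\|_{L^2(D)}^2+\|\dot W_n\|_{L^2(D)}^2+\|g(u_n)\|_{L^2(D)}^2+\|(u_n)_t\|_{L^2(D)}^2\bigr).
\end{equation*}

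\textbf{Step 2 (integrating in time and taking expectation).} Integrating on $[0,T]$, taking expectation, and plugging in Lemma \ref{lemw} to control $\mathbb{E}\int_0^T\|\dot W_n\|_{L^2(D)}^2\,dt$, Lemma \ref{lemunt} for $\mathbb{E}\int_0^T\|(u_n)_t\|_{L^2(D)}^2\,dt$, and Remark \ref{lemprojection} for $\mathbb{E}\int_0^T\|g(u_n)\|_{L^2(D)}^2\,dt$, each piece is controlled by a constant of the form $C\bigl(\|u_0\|_{L^2(D)}^2+\|u_0\|_a^2+\int_0^T\|f\|_{L^2(D)}^2\,dt+\sum_k(\gamma_k^n)^2\bigr)$, which is exactly $C_4$. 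This yields (\ref{ms-a-err}).

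\textbf{Step 3 ($L^2$ bound by Aubin--Nitsche duality).} For each fixed $(\omega,t)$, introduce the dual problem: find $\zeta\in H_0^1(D)$ with $a(\zeta,v)=(u_n-\widehat u,v)$ for all $v\in H_0^1(D)$. Let $\zeta_{ms}\in V_{ms}$ be its elliptic projection (i.e.\ CEM-GMsFEM approximation); by the same Lemma 1 of \cite{CEL:CMAME:2018}, $\|\zeta-\zeta_{ms}\|_a\leq C\,H\Lambda^{-1/2}\kappa_0^{-1/2}\|u_n-\widehat u\|_{L^2(D)}$. Using Galerkin orthogonality $a(u_n-\widehat u,\zeta_{ms})=0$ and Cauchy--Schwarz,
\begin{equation*}
\|u_n-\widehat u\|_{L^2(D)}^2 \;=\; a(\zeta-\zeta_{ms},u_n-\widehat u) \;\leq\; \|\zeta-\zeta_{ms}\|_a\,\|u_n-\widehat u\|_a \;\leq\; C\,H\Lambda^{-1/2}\kappa_0^{-1/2}\,\|u_n-\widehat u\|_{L^2(D)}\,\|u_n-\widehat u\|_a,
\end{equation*}
so $\|u_n-\widehat u\|_{L^2(D)}^2\leq C\,H^2\Lambda^{-1}\kappa_0^{-1}\,\|u_n-\widehat u\|_a^2$. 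Integrating in time, taking expectation and inserting (\ref{ms-a-err}) yields (\ref{ms-l2-err}).

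\textbf{Main obstacle.} The only nontrivial bookkeeping is Step 1, since the CEM-GMsFEM bound of \cite{CEL:CMAME:2018} is stated for an elliptic problem with a given right-hand side, while here the elliptic projection is built from the parabolic solution $u_n$; the remedy is to substitute the strong form of (\ref{equ:un}) to identify the effective data $\phi=f+\dot W_n-g(u_n)-(u_n)_t$, after which everything reduces to the square-integrability estimates already available in Lemmas \ref{lemw}--\ref{lemunt} and Remark \ref{lemprojection}. The duality step in Step 3 is then standard.
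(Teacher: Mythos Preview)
Your proposal is correct and follows essentially the same approach as the paper: you identify the effective elliptic data $\phi=f+\dot W_n-g(u_n)-(u_n)_t$, apply the CEM-GMsFEM a priori bound of \cite{CEL:CMAME:2018} pointwise in $(\omega,t)$, integrate and take expectation using the square-integrability lemmas, and then run the standard Aubin--Nitsche duality for the $L^2$ estimate. The paper's proof is organised in exactly the same three steps, with the same auxiliary results invoked.
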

  \begin{proof}
  First, we derive the error estimate for $u_{n}-\widehat{u}$ in the energy norm. Note that the solution $u_{n}\in H^1_{0}(D)$ of Eq.(\ref{eq-uapproweak}) satisfies
  \begin{equation}
  a(u_{n},v) = (f-(u_{n})_{t}-g(u_{n})+\dot{W}_{n},v), \quad \forall v\in H^1_{0}(D).
  \end{equation}
  Thus, $\widehat{u}\in V_{ms}$ satisfies
  \begin{equation}
  a(\widehat{u},v) = (f-(u_{n})_{t}-g(u_{n})+\dot{W}_{n},v), \quad \forall v\in V_{ms}.
  \end{equation}
  By means of Lemma 1 in \cite{CEL:CMAME:2018} and the inequality technique, we have 
  \begin{equation}
  \| u_{n}-\widehat{u}\|_{a}^2\leq CH^{2}\Lambda^{-1}\kappa_{0}^{-1}\big(\|f\|_{L^{2}(D)}^{2}+\|(u_{n})_{t}\|_{L^{2}(D)}^{2}+\|g(u_{n})\|_{L^{2}(D)}^{2}+\|\dot{W}_{n}\|_{L^{2}(D)}^{2}\big).
  \end{equation}
Integrating with respect to time, taking expectation and using Lemma \ref{lemu} and Lemma \ref{lemunt}, we have 
  \begin{equation}\label{equ:lemma8}
  \begin{split}
    \mathbb{E} \int_0^T\|u_{n}-\widehat{u}\|_{a}^2dt &\leq C_{4}H^{2}\Lambda^{-1}\kappa_{0}^{-1},\quad\forall t>0.
  \end{split}
  \end{equation}
 For presentation convenience, we define
  $$C_{4}=C(\|u_{0}\|_{L^{2}(D)}^2+\|u_{0}\|_a^{2}+\int_0^T\|f\|_{L^{2}(D)}^{2}dt+\sum_{k=1}^{\infty}(\sigma_{k}^{n})^{2}).$$\\
  Next, we derive the error estimate for $u_{n}-\widehat{u}$ in the sense of $L^{2}$-norm. We will apply the \text{Aubin-Nitsche} lift technique. For each $t>0$, we define $w\in H^1_{0}(D)$ by
  \begin{equation}
  a(w,v) = (u_{n}-\widehat{u}, v), \quad \forall v\in H^1_{0}(D),
  \end{equation}
  and define $\widehat{w}$ as the elliptic projection of $w$ in the space $V_{ms}$, that is,
  \begin{equation}
  a(\widehat{w},v) = (u_{n}-\widehat{u}, v), \quad \forall v\in V_{ms}.
  \end{equation}
  Taking $v=u_{n}-\widehat{u}\in V_{ms}$, we can deduce that
  \begin{equation}
  \begin{split}
  \| u_{n}-\widehat{u}\|_{L^{2}(D)}^2 & = a(w,u_{n}-\widehat{u})\\
  &= a(w-\widehat{w},u_{n}-\widehat{u})\\
  &\leq \|w-\widehat{w}\|_{a} \, \|u_{n}-\widehat{u}\|_{a}\\
  &\leq (CH\Lambda^{-\frac{1}{2}} \kappa_{0}^{-\frac{1}{2}}\|u_{n}-\widehat{u}\|_{L^{2}(D)}) \|u_{n}-\widehat{u}\|_{a},
  \end{split}
  \end{equation}
  i.e.,
  \begin{equation}\label{equ:un-uaux}
  \| u_{n}-\widehat{u}\|_{L^{2}(D)} \leq CH\Lambda^{-\frac{1}{2}} \kappa_{0}^{-\frac{1}{2}}\|u_{n}-\widehat{u}\|_{a}.
  \end{equation}
  Integrating with respect to time and taking expectation on both sides of Eq.(\ref{equ:un-uaux}), we get
  \begin{equation}
    \mathbb{E}\int_0^T\| u_{n}-\widehat{u}\|_{L^{2}(D)}^{2}dt \leq CH^{2}\Lambda^{-1}\kappa_{0}^{-1}\mathbb{E}\int_0^T\|u_{n}-\widehat{u}\|_{a}^{2}dt.
  \end{equation}
  Combing with Eq.(\ref{equ:lemma8}), we have
  \begin{equation}
    \mathbb{E}\int_0^T\| u_{n}-\widehat{u}\|_{L^{2}(D)}^{2}dt \leq C_{4}H^{4}\Lambda^{-2}\kappa_{0}^{-2}, \quad\forall t>0.
  \end{equation}
 This completes the proof.
  \end{proof}
Then, we give an error estimate between the approximating solution $u_{n}$ and CEM-GMsFEM soluton $u_{n}^{ms}$ in the sense of $L^{2}$-norm.

\begin{theorem}\label{unums}
Let $u_{n}$ and $u_{n}^{ms}$ be the solution of \ref{eq-uapproweak} and {eq-umsweak}, respectively; then the error estimate is
 \begin{equation}
 \mathbb{E} \| (u_{n} - u_{n}^{ms})(x,T) \|_{L^{2}(D)}^2 \leq  C_{5}H^2\Lambda^{-1}\kappa_{0}^{-1}+\mathbb{E}\| (u_{n}-u_{n}^{ms})(x,0) \|_{L^{2}(D)}^2,
 \end{equation}
 where $\displaystyle \Lambda=\min_{1\leq i\leq N}\lambda_{L_{i}+1}^{(i)}$ and 
 \begin{equation}
 C_{5}=C\bigg(\|u_{0}\|_{L_{2}(D)}^{2}+\|u_{0,ms}\|_{L^{2}(D)}^{2}+\|u_{0}\|_a^{2}+\|u_{0,ms}\|_a^{2}+\int_0^T\|f\|_{L^{2}(D)}^{2}dt+ \sum_{k=1}^{\infty}(\gamma_{k}^{n})^{2}\bigg)
 \end{equation}
 is independent of $\kappa(x)$, $\sigma$ and coarse mesh size $H$.
\end{theorem}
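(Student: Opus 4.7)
The plan is to carry out a standard parabolic error decomposition,
$$
e := u_{n}-u_{n}^{ms} = \rho+\theta, \qquad \rho := u_{n}-\widehat{u},\quad \theta := \widehat{u}-u_{n}^{ms},
$$
where $\widehat{u}(\cdot,t)\in V_{ms}$ is the elliptic projection introduced in Lemma \ref{lem:u-uauxilary}. The advantage of this splitting is that $\rho$ is already controlled, in both the energy norm and the $L^{2}$ norm, by Lemma \ref{lem:u-uauxilary}, while $\theta$ lies in $V_{ms}$ and is therefore admissible as a test function. The initial term $\mathbb{E}\|(u_n-u_n^{ms})(x,0)\|_{L^{2}(D)}^{2}$ appearing on the right of the stated inequality enters naturally through $\mathbb{E}\|\theta(0)\|_{L^{2}(D)}^{2}$ together with the triangle inequality applied to $e(0)=\rho(0)+\theta(0)$.

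Subtracting (\ref{eq-umsweak}) from (\ref{eq-uapproweak}) tested against any $v\in V_{ms}\subset H_{0}^{1}(D)$, the deterministic forcing $f$ and the truncated noise $\dot W_n$ cancel identically, leaving
$$
(e_{t},v)+a(e,v) = -\bigl(g(u_{n})-g(u_{n}^{ms}),v\bigr).
$$
Writing $e=\rho+\theta$ and invoking the elliptic-projection identity $a(\rho,v)=0$ for $v\in V_{ms}$ from (\ref{ms-ell}) reduces this to
$$
(\theta_{t},v)+a(\theta,v) = -(\rho_{t},v)-\bigl(g(u_{n})-g(u_{n}^{ms}),v\bigr), \quad \forall v\in V_{ms}.
$$
The choice $v=\theta$ then yields the energy identity
$$
\tfrac{1}{2}\tfrac{d}{dt}\|\theta\|_{L^{2}(D)}^{2}+\|\theta\|_{a}^{2} = -(\rho_{t},\theta)-\bigl(g(u_{n})-g(u_{n}^{ms}),\theta\bigr).
$$

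The right-hand side is dispatched by standard estimates. I bound $(\rho_{t},\theta)$ by Young's inequality, $|(\rho_{t},\theta)|\le \tfrac{1}{2}\|\rho_{t}\|_{L^{2}(D)}^{2}+\tfrac{1}{2}\|\theta\|_{L^{2}(D)}^{2}$. For the nonlinear term I write $\theta = e-\rho$, use Assumption \ref{a3}(1) to obtain $-(g(u_{n})-g(u_{n}^{ms}),e)\le L\|e\|_{L^{2}(D)}^{2}$, and apply the sublinear bound of Assumption \ref{a3}(2) together with Cauchy--Schwarz to $(g(u_{n})-g(u_{n}^{ms}),\rho)$; the net contribution is controlled by $C(\|\theta\|_{L^{2}(D)}^{2}+\|\rho\|_{L^{2}(D)}^{2}+M^{2})$. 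Integrating in $t$ from $0$ to $T$, taking expectation, inserting the $\rho$-bounds of Lemma \ref{lem:u-uauxilary}, and invoking Grönwall's inequality delivers
$$
\mathbb{E}\|\theta(T)\|_{L^{2}(D)}^{2}\le e^{CT}\Bigl(\mathbb{E}\|\theta(0)\|_{L^{2}(D)}^{2}+C\,\mathbb{E}\!\int_{0}^{T}\!\bigl(\|\rho\|_{L^{2}(D)}^{2}+\|\rho_{t}\|_{L^{2}(D)}^{2}\bigr)dt\Bigr).
$$
Finally, $\|e(T)\|_{L^{2}(D)}^{2}\le 2\|\rho(T)\|_{L^{2}(D)}^{2}+2\|\theta(T)\|_{L^{2}(D)}^{2}$ together with the energy estimate (\ref{ms-a-err}) produces the claimed bound.

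The principal obstacle, and the reason the stated rate is $H^{2}\Lambda^{-1}\kappa_{0}^{-1}$ instead of the $L^{2}$-optimal $H^{4}\Lambda^{-2}\kappa_{0}^{-2}$ of Lemma \ref{lem:u-uauxilary}, is the bound on $\mathbb{E}\int_{0}^{T}\|\rho_{t}\|_{L^{2}(D)}^{2}\,dt$. Differentiating (\ref{ms-ell}) in time shows that $\widehat{u}_{t}$ is the CEM elliptic projection of $(u_{n})_{t}$, so the approximation lemma applies once more, but now the data side involves $(u_{n})_{tt}$, $f_{t}$, $g'(u_{n})(u_{n})_{t}$ and $(\dot W_{n})_{t}$. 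Controlling these in mean square, uniformly in $\kappa$ and $\sigma$, requires re-running the energy arguments of Lemmas \ref{lemunt}--\ref{lemunmst} on the time-differentiated equation and leveraging the decay of the truncation coefficients $\{\gamma_{k}^{n}\}$ for the noise term. Tracking all constants carefully so that they depend only on the data is what pins down the constant $C_{5}$ in the final estimate.
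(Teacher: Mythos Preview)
Your $\rho$--$\theta$ splitting is the classical Wheeler--Thom\'ee route, but it is \emph{not} what the paper does, and the difference matters precisely at the point you flag as the ``principal obstacle.'' The paper never estimates $\rho_{t}$. Instead it keeps $e=u_{n}-u_{n}^{ms}$ intact on the left and chooses the test function $v=\widehat{u}-u_{n}^{ms}=e-(u_{n}-\widehat{u})$; after rearranging, the time-derivative term that survives on the right is $((u_{n}-u_{n}^{ms})_{t},\,u_{n}-\widehat{u})$, which is bounded crudely by $(\|(u_{n})_{t}\|_{L^{2}}+\|(u_{n}^{ms})_{t}\|_{L^{2}})\|u_{n}-\widehat{u}\|_{L^{2}}$. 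The first two factors are controlled by Lemmas~\ref{lemunt} and~\ref{lemunmst}, the last by Lemma~\ref{lem:u-uauxilary}, and Gr\"onwall closes the argument. This is exactly why the constant $C_{5}$ contains $\|u_{0,ms}\|_{a}^{2}$ (coming from Lemma~\ref{lemunmst}) and why the rate is only $H^{2}\Lambda^{-1}\kappa_{0}^{-1}$: one factor of $H\Lambda^{-1/2}\kappa_{0}^{-1/2}$ is spent pairing $\|u_{n}-\widehat{u}\|_{L^{2}}$ against the merely bounded quantity $\|(u_{n})_{t}\|+\|(u_{n}^{ms})_{t}\|$.

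Your alternative is in principle viable, but the step ``re-running the energy arguments on the time-differentiated equation'' is a genuine gap relative to the paper's hypotheses. Bounding $\mathbb{E}\int_{0}^{T}\|\rho_{t}\|_{L^{2}}^{2}\,dt$ via Lemma~\ref{lem:u-uauxilary} applied to $(u_{n})_{t}$ forces you to control $\|(u_{n})_{tt}\|_{L^{2}}$, $\|f_{t}\|_{L^{2}}$, $\|g'(u_{n})(u_{n})_{t}\|_{L^{2}}$ and $\|(\dot W_{n})_{t}\|_{L^{2}}$. None of these are provided: the paper assumes nothing about $f_{t}$; the cubic nonlinearity makes $g'(u_{n})(u_{n})_{t}\sim u_{n}^{3}(u_{n})_{t}$, which needs higher integrability than Lemma~\ref{lemu} gives; and $\mathbb{E}\int_{0}^{T}\|(\dot W_{n})_{t}\|^{2}dt$ scales like $\sum_{k}(\gamma_{k}^{n})^{2}\|\chi_{k}'\|_{L^{2}}^{2}$ rather than $\sum_{k}(\gamma_{k}^{n})^{2}$, so the resulting constant would not be the $C_{5}$ stated in the theorem. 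The paper's test-function trick sidesteps all of this by trading one order of $H$ for the readily available first-order time regularity of $u_{n}$ and $u_{n}^{ms}$.
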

 \begin{proof}: Following the same line of \cite{ZGJ:JCAM:2021}, we obtain
  \begin{equation}
  ((u_{n}-u_{n}^{ms})_t,v) + a(u_{n}-u_{n}^{ms},v) =-(g(u_{n})-g(u_{n}^{ms}),v)
  \end{equation}
  for all $v\in V_{ms}$.
  
  We define $\widehat{u}\in V_{ms}$ as the elliptic projection of $u$, which satisfies Eqs.(\ref{ms-a-err}) and (\ref{ms-l2-err}). Taking $v=\widehat{u}-u_{n}^{ms}=(u_{n}-u_{n}^{ms})-(u_{n}-\widehat{u})$, we have the following equation.
  \begin{equation}
  \begin{split}
  &\: ((u_{n}-u_{n}^{ms})_t,u_{n}-u_{n}^{ms})+a(u_{n}-u_{n}^{ms},u_{n}-u_{n}^{ms})\\
  &\:=((u_{n}-u_{n}^{ms})_t,u_{n}-\widehat{u})+a(u_{n}-u_{n}^{ms},u_{n}-\widehat{u})\\
  &\: -(g(u_{n})-g(u_{n}^{ms}),u_{n}-u_{n}^{ms})+(g(u_{n})-g(u_{n}^{ms}),u_{n}-\widehat{u}).
  \end{split}
  \end{equation}
  This yields
  \begin{equation}\label{equ:abcd}
  \begin{split}
  &\: \frac{1}{2} \frac{d}{dt} \|u_{n}- u_{n}^{ms} \|_{L^{2}(D)}^2 +\|u_{n}- u_{n}^{ms} \|_{a}^2\\
  &\:=((u_{n}-u_{n}^{ms})_t,u_{n}-\widehat{u})+a(u_{n}-u_{n}^{ms},u_{n}-\widehat{u})\\
  &\: -(g(u_{n})-g(u_{n}^{ms}),u_{n}-u_{n}^{ms})+(g(u_{n})-g(u_{n}^{ms}),u_{n}-\widehat{u}).
  \end{split}
  \end{equation}
  Applying Assumption \ref{a3}, the Cauchy-Schwarz and Young's inequality, we have
  \begin{equation}\label{equ:a}
  \begin{split}
  ((u_{n}-u_{n}^{ms})_t,u_{n}-\widehat{u})&\leq \|(u_{n})_t-(u_{n}^{ms})_t\|_{L^{2}(D)}\|u_{n}-\widehat{u}\|_{L^{2}(D)}\\
  &\leq(\|(u_{n})_t\|_{L^{2}(D)}+\|(u_{n}^{ms})_t\|_{L^{2}(D)})\|u_{n}-\widehat{u}\|_{L^{2}(D)},
  \end{split}
  \end{equation}
  \begin{equation}\label{equ:b}
  \begin{split}
  a(u_{n}-u_{n}^{ms},u_{n}-\widehat{u})&\leq \|u_{n}-u_{n}^{ms}\|_{a}\|u_{n}-\widehat{u}\|_{a}\\
  &\leq\frac{1}{2}\|u_{n}-u_{n}^{ms}\|_{a}^{2}+\frac{1}{2}\|u_{n}-\widehat{u}\|_{a}^{2},
  \end{split}
  \end{equation}
  \begin{equation}\label{equ:c}
  \begin{split}
  -(g(u_{n})-g(u_{n}^{ms}),u_{n}-u_{n}^{ms})&=\int_{D}(g(u_{n})-g(u_{n}^{ms}))(u_{n}-u_{n}^{ms})dx \\ &\leq L\|u_{n}- u_{n}^{ms}\|_{L^{2}(D)}^2,
  \end{split}
  \end{equation}
  \begin{equation}\label{equ:d}
  \begin{split}
  (g(u_{n})-g(u_{n}^{ms}),u_{n}-\widehat{u})&=\int_{D}(g(u_{n})-g(u_{n}^{ms}))(u_{n}-\widehat{u})dx \\
  &\leq M\|u_{n}-\widehat{u}\|_{L^{2}(D)}+\frac{R}{2}(\|u_{n}- u_{n}^{ms} \|_{L^{2}(D)}^2+\|u_{n}-\widehat{u}\|_{L^{2}(D)}^{2}).
  \end{split}
  \end{equation}
 If we use Eqs.(\ref{equ:a}), (\ref{equ:b}),(\ref{equ:c}) and (\ref{equ:d}) in Eq.(\ref{equ:abcd}), then we have 
  \begin{equation}
  \begin{split}
  &\:  \frac{d}{dt} \|u_{n}- u_{n}^{ms}\|_{L^{2}(D)}^2 +\|u_{n}-u_{n}^{ms}\|_{a}^2\\
  &\:\leq(2L+R)(\|u_{n}-u_{n}^{ms} \|_{L^{2}(D)}^2+R\|u_{n}-\widehat{u}\|_{L^{2}(D)}^{2}\\
  &\:+\|u_{n}-\widehat{u}\|_{a}^{2}+2(\|(u_{n)})_t\|_{L^{2}(D)}+\|(u_{n}^{ms})_t\|_{L^{2}(D)}+M)\|u_{n}-\widehat{u}\|_{L^{2}(D)}.
  \end{split}
  \end{equation}
  Combining Lemma \ref{lemunmst} and Lemma \ref{lem:u-uauxilary}, integrating with respect to time and taking expectation of both sides, we have
  \begin{equation}
  \begin{split}
  &\: \mathbb{E} \| (u_{n}- u_{n}^{ms})(x,T) \|_{L^{2}(D)}^2 + \mathbb{E}\int_0^T \|u_{n}- u_{n}^{ms}\|_{a}^2dt\\
  &\: \leq \mathbb{E}\int_0^T(2L+R)\| u_{n} -u_{n}^{ms}\ \|_{L^{2}(D)}^2dt+\mathbb{E}\| (u_{n}- u_{n}^{ms})(x,0) \|_{L^{2}(D)}^2 \\&\:+C_{5}H^2\Lambda^{-1}\kappa_{0}^{-1}(1+CH^2\Lambda^{-1}\kappa_{0}^{-1}).
  \end{split}
  \end{equation}
 For presentation convenience, we define
  \begin{equation}
    C_{5}=C\bigg(\|u_{0}\|_{L_{2}(D)}^{2}+\|u_{0,ms}\|_{L^{2}(D)}^{2}+\|u_{0}\|_a^{2}+\|u_{0,ms}\|_a^{2}+\int_0^T\|f\|_{L^{2}(D)}^{2}dt+\sum_{k=1}^{\infty}(\gamma_{k}^{n})^{2}\bigg).
  \end{equation}
  Taking into account that $\mathbb{E}\int_0^T \|u_{n}- u_{n}^{ms}\|_{a}^2\geq 0$ and using the $Gronwall's$ inequality, we have
  \begin{equation}
    \mathbb{E}\| (u_{n} -u_{n}^{ms})(x,T) \|_{L^{2}(D)}^2\leq  C_{5}H^2\Lambda^{-1}\kappa_{0}^{-1}+\mathbb{E}\| (u_{n}- u_{n}^{ms})(x,0) \|_{L^{2}(D)}^2.
  \end{equation}
  This completes the proof.
  \end{proof}
  
Furthermore, combining Theorem \ref{thmapproerror} and Theorem \ref{unums}, an estimate on 
$\mathbb{E}\|(u - u_{n}^{ms})(x,T)\|_{L^{2}(D)}^{2}$
follows from the triangle inequality.
\begin{theorem}\label{them3}
\label{thmfinal}
\textcolor{black}{Let $u$ and $u_{n}^{ms}$ be the solution of (\ref{u}) and (\ref{eq-umsweak}), respectively. Then we can prove that}
\begin{equation}
 \begin{split}
  \mathbb{E}\|(u - u_{n}^{ms})(x,T)\|_{L^{2}(D)}^{2}& \leq C\bigg(M\|\zeta\|_{L^{2}(D)}+(C_{p}+R)\|\zeta\|_{L^{2}(D)}^{2}\bigg)\\
  & +C_{5}H^2\Lambda^{-1}\kappa_{0}^{-1}+\mathbb{E}\| (u - u_{n}^{ms})(x,0) \|_{L^{2}(D)}^2,
\end{split}
\end{equation}
\textcolor{black}{where $C_{5}$ is defined as before. The constants $M$, $C_{p}$, and $R$ are from Assumption \ref{a3}}.
\end{theorem}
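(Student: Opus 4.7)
The plan is to derive the final bound by combining the two previously established error estimates through a triangle inequality decomposition. Specifically, I would write
\begin{equation*}
u(x,T) - u_{n}^{ms}(x,T) = \bigl(u(x,T) - u_{n}(x,T)\bigr) + \bigl(u_{n}(x,T) - u_{n}^{ms}(x,T)\bigr),
\end{equation*}
take the $L^{2}(D)$ norm, square it, and use the elementary inequality $(a+b)^{2}\leq 2a^{2}+2b^{2}$ to decouple the truncation contribution from the multiscale contribution before taking expectation.

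Next, I would apply Theorem \ref{thmapproerror} to control $\mathbb{E}\|u - u_{n}\|_{L^{2}(D)}^{2}$ by $C(M\mathbb{E}\|\zeta\|_{L^{2}(D)}+(C_{p}+R)^{2}\mathbb{E}\|\zeta\|_{L^{2}(D)}^{2})$, and Theorem \ref{unums} to control $\mathbb{E}\|u_{n} - u_{n}^{ms}\|_{L^{2}(D)}^{2}$ at time $T$ by $C_{5}H^{2}\Lambda^{-1}\kappa_{0}^{-1}+\mathbb{E}\|(u_{n}-u_{n}^{ms})(x,0)\|_{L^{2}(D)}^{2}$. The initial-data term in the second estimate can then be folded back into an initial error for the true solution via the identity $u_{n}(x,0) = u(x,0) = u_{0}$, so that $\mathbb{E}\|(u_{n}-u_{n}^{ms})(x,0)\|_{L^{2}(D)}^{2} = \mathbb{E}\|(u-u_{n}^{ms})(x,0)\|_{L^{2}(D)}^{2}$, which is exactly the initial error appearing in the statement.

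Finally, I would absorb all multiplicative constants (the factor $2$ from the triangle inequality squared, and the constants from the two theorems) into a single generic constant $C$, keeping the dependence on $M$, $C_{p}$, $R$ and $C_{5}$ explicit as in the statement. The resulting bound matches the claimed inequality.

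I do not expect any genuine obstacle in this argument: it is essentially a bookkeeping exercise gluing Theorem \ref{thmapproerror} and Theorem \ref{unums} together. The only mildly delicate point is making sure the initial-data discrepancy is properly identified with the $L^{2}$ projection error $u_{0} - u_{0,ms}$ used when constructing $u_{n}^{ms}$, and that the assumption $C_{p} - L \geq 0$ from Assumption \ref{a3} (which was already invoked in the proof of Theorem \ref{thmapproerror}) is carried through so that the dependence on $L$ is hidden inside $C$ rather than appearing explicitly.
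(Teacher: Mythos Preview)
Your proposal is correct and matches the paper's approach exactly: the paper states that the estimate ``follows from the triangle inequality'' by combining Theorem~\ref{thmapproerror} and Theorem~\ref{unums}, which is precisely the decomposition $u-u_n^{ms}=(u-u_n)+(u_n-u_n^{ms})$ you describe, together with the observation $u_n(x,0)=u(x,0)=u_0$ to identify the initial-error term.
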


\textcolor{black}{The error estimation of this numerical scheme consists of three parts. The first part originates from the truncation of the infinite-dimensional noise $W(t)$, which can decrease by choosing more truncated terms. The second part originates from the approximated error of the spatial multiscale space $V_{ms}$, which can decrease by choosing more degrees of freedom in each local domain. The third part originates from the projection error of the initial value $u_0(x)$ in multiscale space $V_{ms}$. 
}
\section{Numerical results}

\qquad \textcolor{black}{In this section, we present a few representative numerical examples to evaluate the performance of the proposed semi-implicit stochastic multiscale method for solving the stochastic radiative heat transfer equation. We mainly focus on the verification of the accuracy and efficiency of the CEM-GMsFEM of the semi-implicit predictor-corrected numerical scheme.}
We will use the backward Euler scheme for time discretization. In subsection \ref{Num-examp1}, we present some numerical examples for Eq.(\ref{u}) in a bounded domain with a periodic microstructure of period $\varepsilon=\frac{1}{8}$ to demonstrate the convergence of CEM-GMsFEM. In subsection \ref{Num-examp2}, the effects of the problem (\ref{u}) in a nonperiodic domain with  different rapidly oscillating coefficients are considered.

 We consider the problems (\ref{u}) on the domain $D=[0,1]^2$, and the final computational time is  $T=0.1$. The source term $f(x,t)$ and initial condition $u_{0}(x)$ are defined by
\begin{equation}
  f(x,t)=3\pi^{2}e^{\pi^{2}t}\sin(\pi x_{1})\sin(\pi x_{2}),
\end{equation}
\begin{equation}
 u_{0}(x)=\sin(\pi x_{1})\sin(\pi x_{2}).
\end{equation}

For stochastic radiative heat transfer Eq.(\ref{u}), we choose the noise in the form of (\ref{noise}) with the following coefficient $\gamma_{k}$ (Ref.\cite{DZ:SIAMJNA:2002}):
\begin{equation}
\gamma_{k}=\frac{1}{k^{3/2}},\quad
\gamma_{k}^{n}=\left\{
\begin{aligned}
\gamma_{k} & , & if\quad k\leq n, \\
~0 & , & if\quad k> n.
\end{aligned}
\right.
\end{equation}
Thus, by the above and Lemma \ref{thmzeta}, we have 
\begin{equation}\label{zeta11}
  \mathbb{E} \|\zeta(t)\|_{L^{2}(D)}^{2} \leq C \|\bm{\gamma}-\bm{\gamma^{n}}\|_{Q_{-1}}^{2}=C\sum_{k=n+1}^{\infty} (\frac{1}{k^{3/2}}\cdot \frac{1}{k})^{2}\leq C\frac{1}{n^{4}},\quad \forall t> 0.
\end{equation}
In the approximate representations of white noises, we set the deterministic functions $\chi_{k}(t)=\sqrt{2}\sin(k\pi t)$ in Eq.(\ref{appnoise}) , which form an orthonormal basis in $L_{2}([0,T])$. Then Theorem \ref{them3} can be rewritten as follows
\begin{equation}\label{CEMconvergence}
  \begin{split}
   \mathbb{E}\|(u - u_{n}^{ms})(x,T)\|_{L^{2}(D)}^{2}& \leq C\bigg(M\frac{1}{n^{2}}+(C_{p}+R)\frac{1}{n^{4}}\bigg)\\
   & +C_{5}H^2\Lambda^{-1}\kappa_{0}^{-1}+\mathbb{E}\| (u - u_{n}^{ms})(x,0) \|_{L^{2}(D)}^2.
  \end{split}
\end{equation}
To quantify the accuracy of the CEM-GMsFEM, we define relative $L_{2}$ norm error and weighted energy error at the time $T$:
\begin{equation}
  \varepsilon_{L_{2}}=\frac{\|\mathbb{E}(u_{n}^{ms}(x,T))-\mathbb{E}(u_{h}(x,T))\|_{L_{2}(D)}}{\|\mathbb{E}(u_{h}(x,T))\|_{L_{2}(D)}},
\end{equation} 
and
\begin{equation}
  \varepsilon_{a}=\frac{\|\mathbb{E}(u_{n}^{ms}(x,T))-\mathbb{E}(u_{h}(x,T))\|_{a}}{\|\mathbb{E}(u_{h}(x,T))\|_{a}},
\end{equation} 
where $\mathbb{E}(u_{h}(x,T))$ and $\mathbb{E}(u_{n}^{ms}(x,T))$ are the expectations of the reference and CEM-GMsFEM solution at the final moment $T$ over all discretized Brownian paths, and the number of Brownian paths is 100. For the spatial discretization, $100\times 100$ uniform fine grids in domain $D$ are fixed to compute the reference solution, and different coarse grids size are used for CEM-GMsFEM solution. For numerical computation of additive white noises, the random number generator \textbf{randn} is used to generate  independent "pseudorandom" numbers from standard normal  distribution. To compute the mean, we use Monte Carlo sampling for the numerical examples  with the Mersenne Twister random generator(seed 100).
\subsection{Example 1}\label{Num-examp1}
\qquad 
In this example, we show the numerical results of solving Eq. (\ref{u}) with CEM-GMsFEM in a bounded domain $D$ with a periodic microstructure with period $\varepsilon=\frac{1}{8}$. The heat conduction coefficient $\kappa(x)=\kappa_{1}(x)$  is depicted in the left side of Fig. \ref{example1-coef-basis}. The Stefan-Boltzmann coefficients are chosen to have the same value as the heat conduction coefficient. The number of oversampling layers is $4*(\log(H)/\log(1/10))$, and we take $6$ basis functions on each coarse block (see Ref.\cite{ZGJ:JCAM:2021}). To investigate the influence of using different numbers of basis functions, we use a fixed coarse grid with grid size $H =1/10$ and a fixed number of oversampling layers $m=4$. We observe that using more local basis functions will definitely lead to a more accurate coarse solution due to the high contrast, which agrees with the observations from the right side of Fig. \ref{example1-coef-basis}. Once the number of local basis functions exceeds a certain number, the errors decay slower. This happens when the decay of the eigenvalues slows down.
\begin{figure}[ht]
  \centering
  \includegraphics[width=2.2in, height=2in]{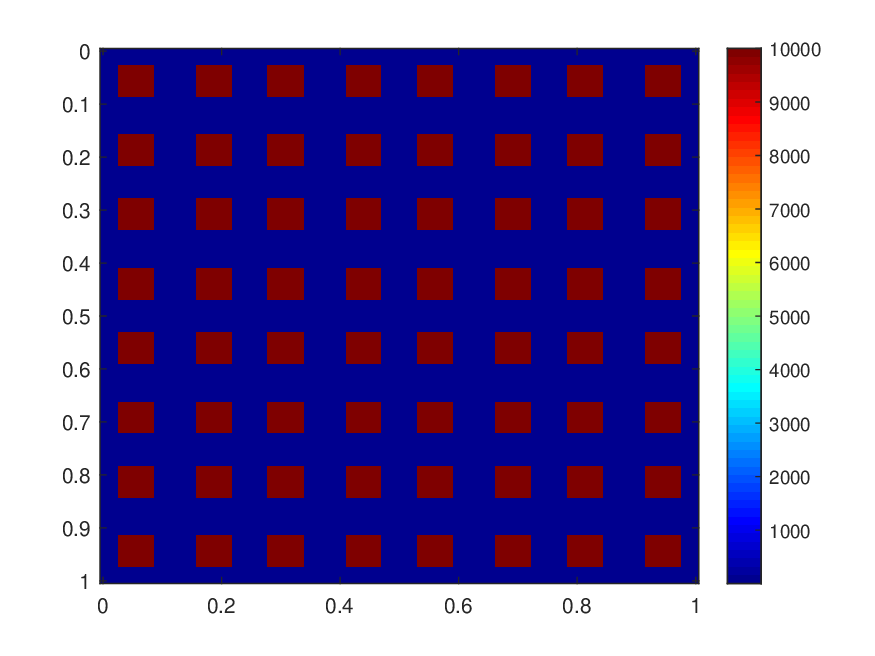}
  \includegraphics[width=2.2in, height=2in]{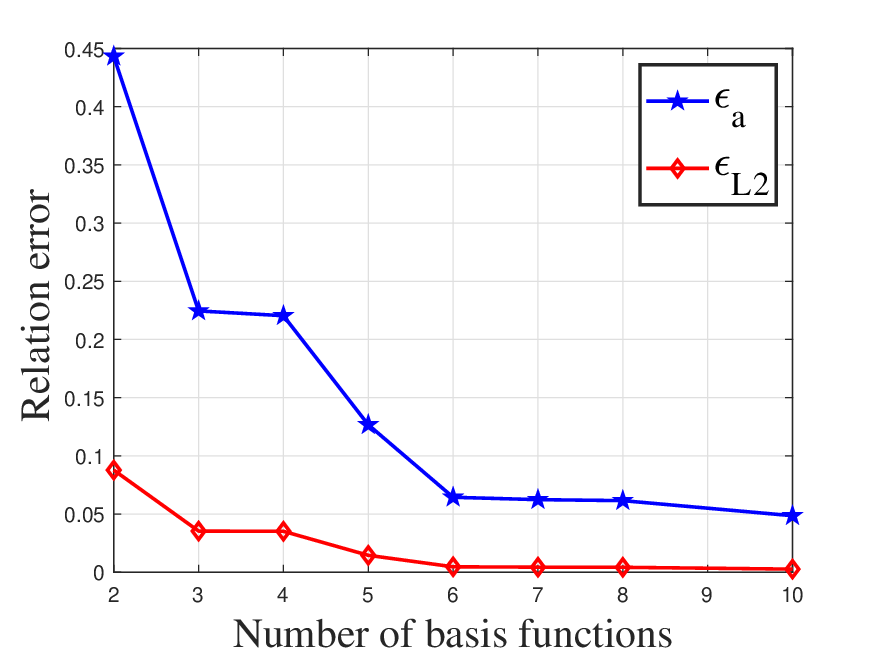}
  \caption{Map of field $\kappa(x)$(left), numerical results with various numbers of basis functions(right).}
  \label{example1-coef-basis}
 \end{figure}
\begin{figure}[htpb]
  \subfigure[ ]{
    \begin{minipage}[t]{0.3\linewidth}
    \centering
    \includegraphics[width=2in, height=1.9in]{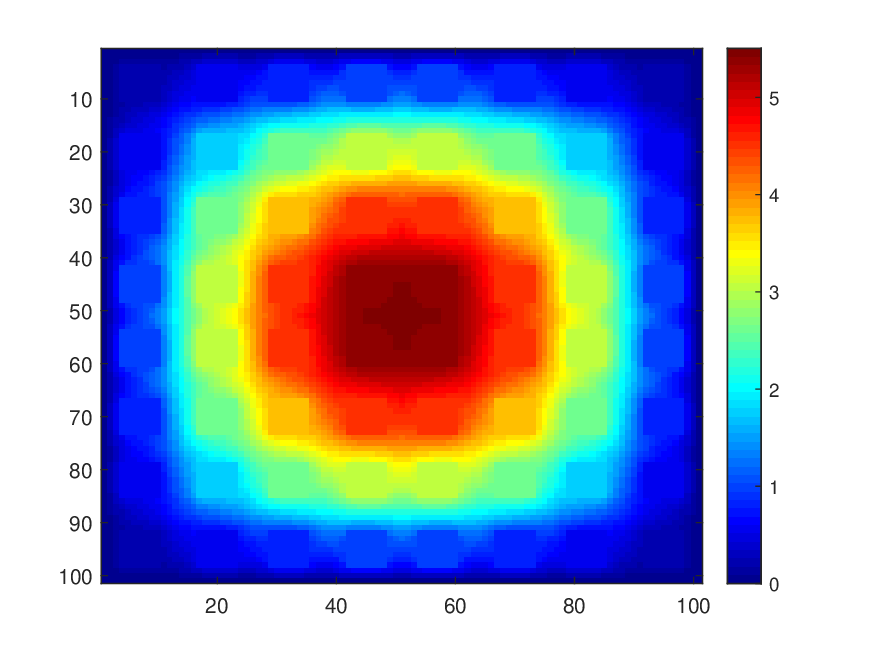}
    \end{minipage}
  }
  \subfigure[ ]{
    \begin{minipage}[t]{0.3\linewidth}
    \centering
    \includegraphics[width=2in, height=1.9in]{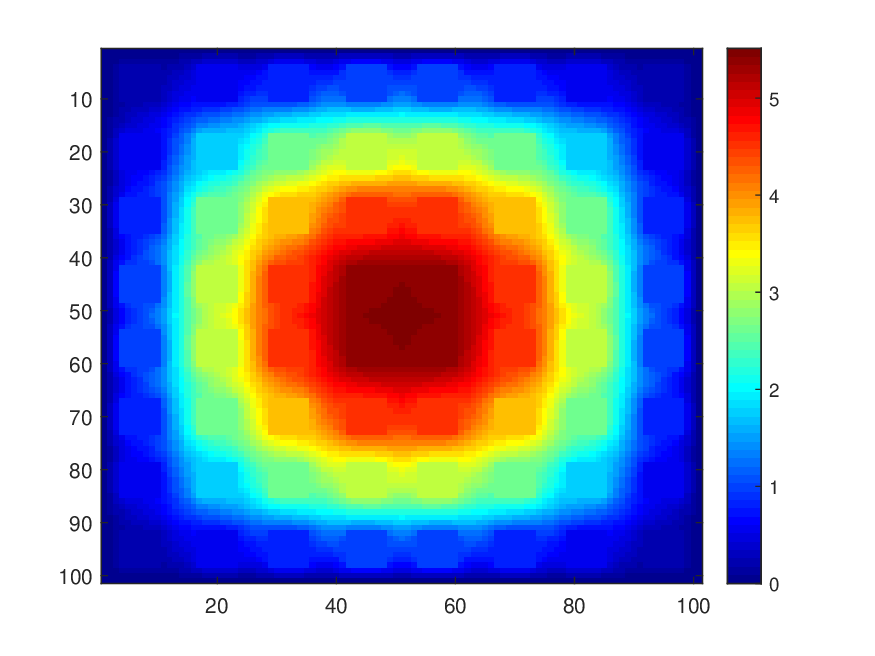}
    \end{minipage}
  }
  \subfigure[ ]{
     \begin{minipage}[t]{0.3\linewidth}
    \centering
    \includegraphics[width=2in, height=1.9in]{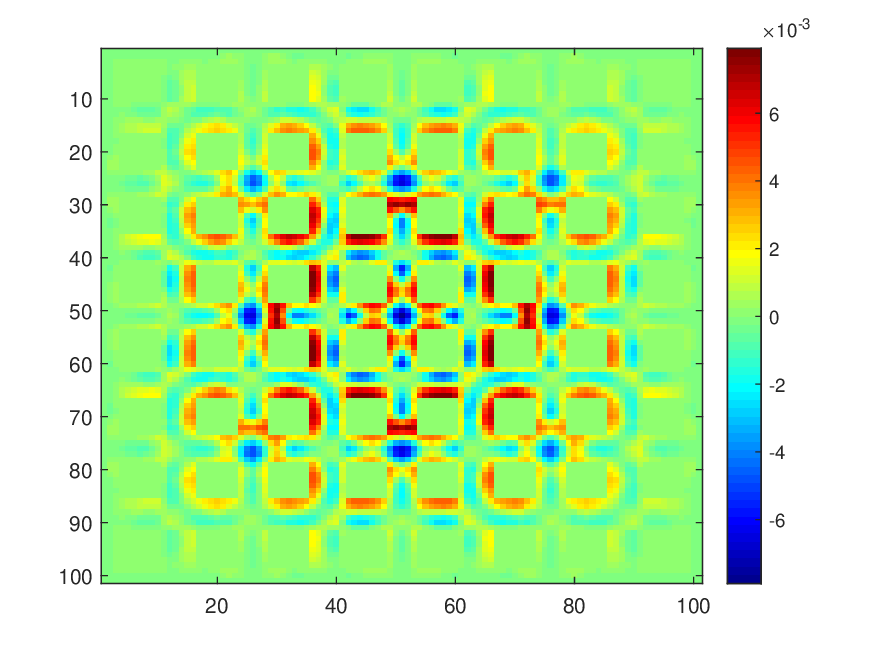}
    \end{minipage}
  }
    \caption{(a) the mean of reference solution at $T=0.1$ (b) the mean of CEM-GMsFEM solution at $T=0.1$ (c) the pointwise error between (a) and (b).}
    \label{example1-solutions}
  \end{figure}

Figs. \ref{example1-solutions}(a) and (b) depict the numerical mean solutions by the fine-scale formulation and the coarse-scale formulation at the final time $T = 0.1$, respectively. The corresponding pointwise error $\mathbb{E}(u_{n}^{ms}(x,T))-\mathbb{E}(u_{h}(x,T))$ at $T=0.1$ are plotted in Fig. \ref{example1-solutions}(c) to verify the accuracy of our proposed methods. One can see that the proposed method captures most of the details of the reference solution and provides very good accuracy at a reduced computational expense.
\begin{table}[htpb]
  \centering
  \caption{Comparison ($\varepsilon_{a}$) of various number of oversampling layers and different contrast values, $H=1/10$, $a=6$.}
  \vspace{1pt}
  \begin{tabular}{|c|c|c|c|c|c|c|}
    \hline
    \diagbox{layers}{contrast}   & $10^4$               & $10^5$            & $10^6$            & $10^7$             & $10^8$       & $10^9$ \\
    \hline
     $3$                        & $6.4408e$-$02$     & $1.3561e$-$02$   &  $1.3562e$-$02$    &  $1.3554e$-$02$    & $1.3559e$-$02$  &  $1.3560e$-$02$\\
    \hline
     $4$                        & $6.4409e$-$02$      & $1.2244e$-$02$  &  $1.2247e$-$02$    &  $1.2243e$-$02$   &  $1.2243e$-$02$   &  $1.2243e$-$02$\\
    \hline
     $5$                        & $6.4382e$-$02$      & $1.2201e$-$02$  &  $1.2204e$-$02$    &  $1.2200e$-$02$   &  $1.2200e$-$02$   &  $1.2199e$-$02$\\
    \hline
  \end{tabular}
  \label{example1-contrastness}
  \end{table}

In Tab. \ref{example1-contrastness}, the performance of the proposed methods with respect to the relation between contrast values and the number of oversampling layers is presented. We compare the energy error ($\varepsilon_{a}$) with various contrast values from $10^{4}$ to $10^{9}$, where the conductivity values in the channels are the same, with $6$ basis functions per coarse region and coarse mesh size of $H = 1/10$.
\begin{figure}[ht]
  \centering
  \includegraphics[width=2.2in, height=2in]{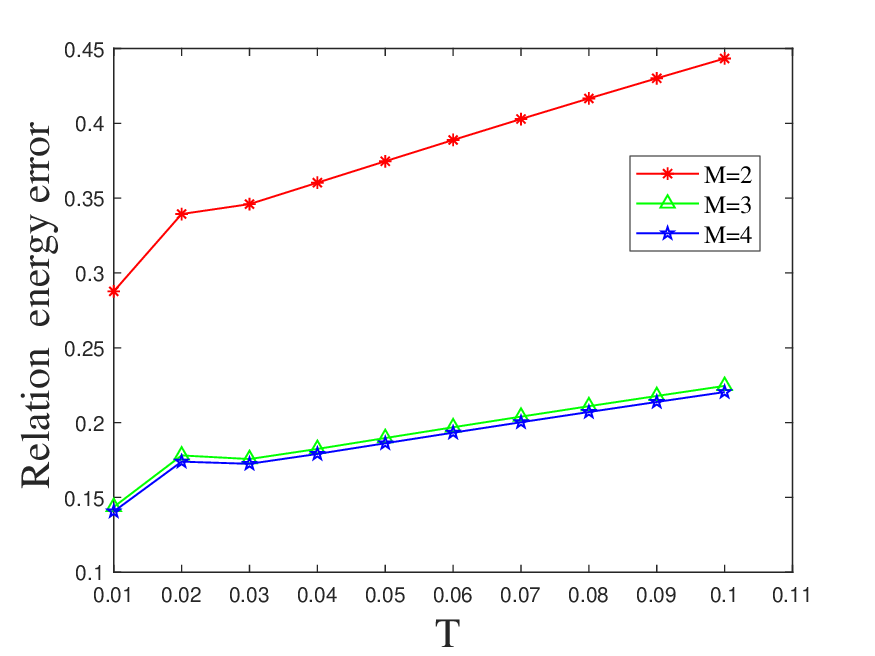}
  \includegraphics[width=2.2in, height=2in]{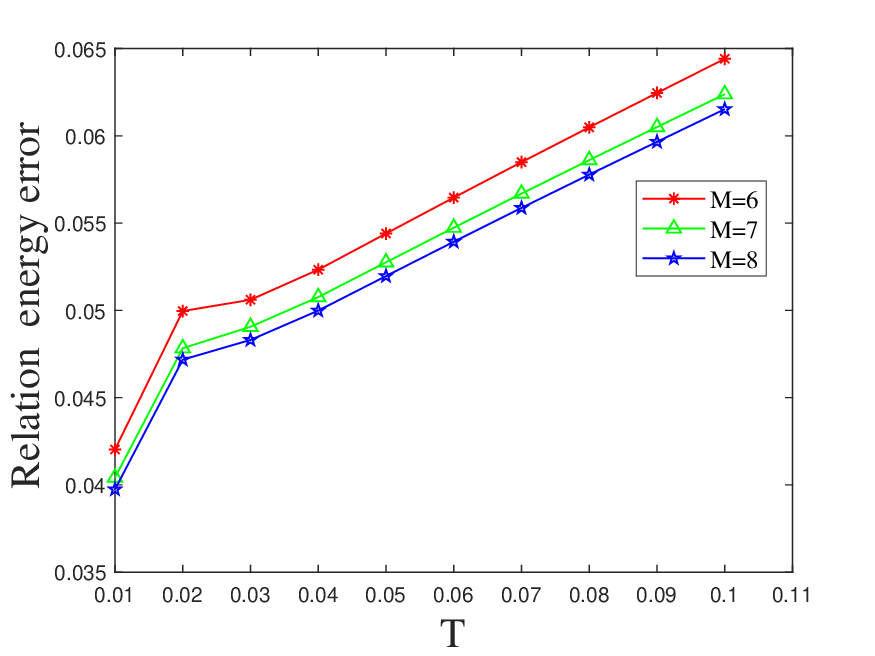}
  \caption{Relative energy error under different time $T$ with various numbers of 
  basis functions in each coarse region $M=2,3,4,6,7,8$, where time step $\Delta t$ is $0.001$.}
  \label{dtenegyerror}
 \end{figure}

Fig. \ref{dtenegyerror} presents the properties of the relative energy error for our proposed method. We plot the relative energy error at time $T=[0,0.1]$ under various numbers of basis functions in each coarse region $M=2,3,4,6,7,8$, where time step $\Delta t$ is $0.001$. We can see the improvements in accuracy by enriching the basis functions in each coarse region from $M=2$ to $M=4$ and from $M=6$ to $M=8$. As time $T$ increases from $0.01$ to $0.1$, the energy errors at these times are adding up. The energy error has a cumulative effect as time $T$ increase, mainly because of the nonlinear part of the term. The nonlinear part, especially $\sigma(x)u^{4}$, makes any small error amplified.

\subsection{Example 2}\label{Num-examp2}
\qquad In this example, we use the coefficients depicted in Figs. \ref{exam2_no_coef} and \ref{exam2_over_coef} that corresponds to coefficients with background one and high conductivity inclusions within all circular regions with different centers and sizes. We consider two cases, corresponding to overlapping inclusions (Fig. \ref{exam2_over_coef}) and non-overlapping inclusions (Fig. \ref{exam2_no_coef}), respectively. The centers of these circular regions obey the uniform distribution in region $D = [0,1] \times [0,1]$, and the radii obey the uniform distribution in the interval [0.05, 0.1]. Unless otherwise specified, other parameters are the same as in the previous example.

\begin{figure}[htbp]
	\centering  
	\subfigure[N =10]{
		\label{exam2_no_coef1}
		\includegraphics[width=2in, height=1.9in]{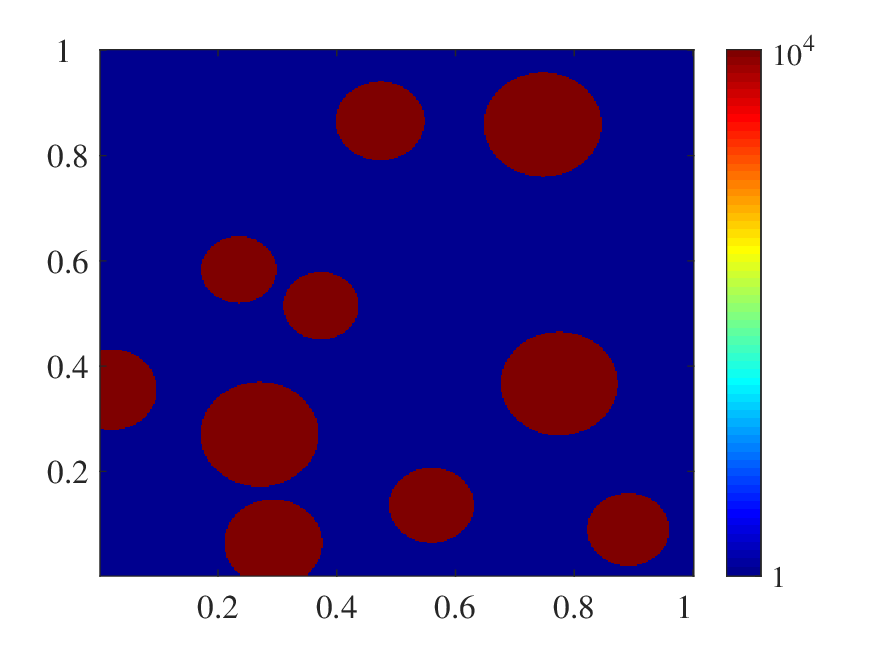}}
	\subfigure[N = 20]{
		\label{exam2_no_coef2}
		\includegraphics[width=2in, height=1.9in]{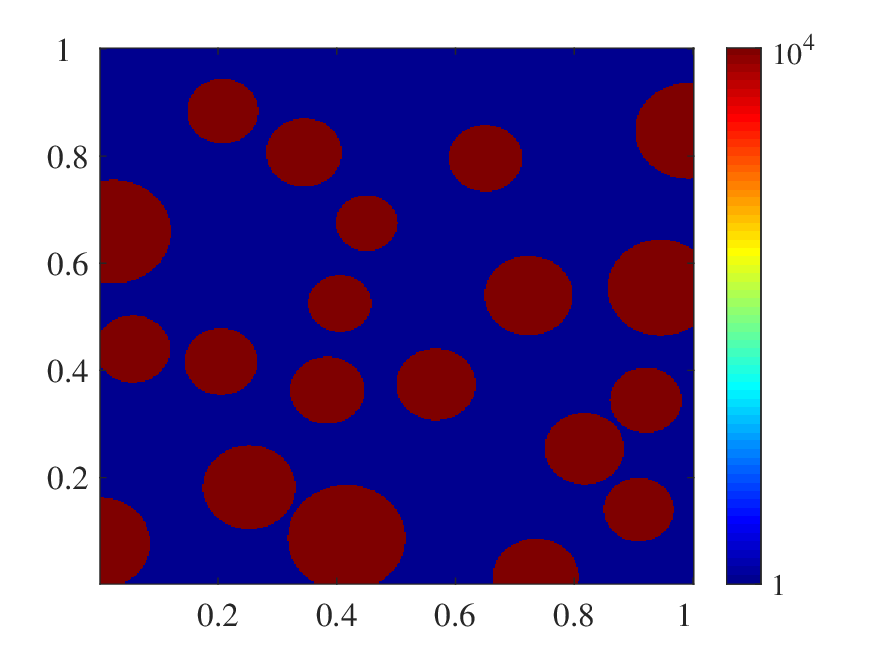}}
	\\
	\subfigure[N =30]{
		\label{exam2_no_coef3}
		\includegraphics[width=2in, height=1.9in]{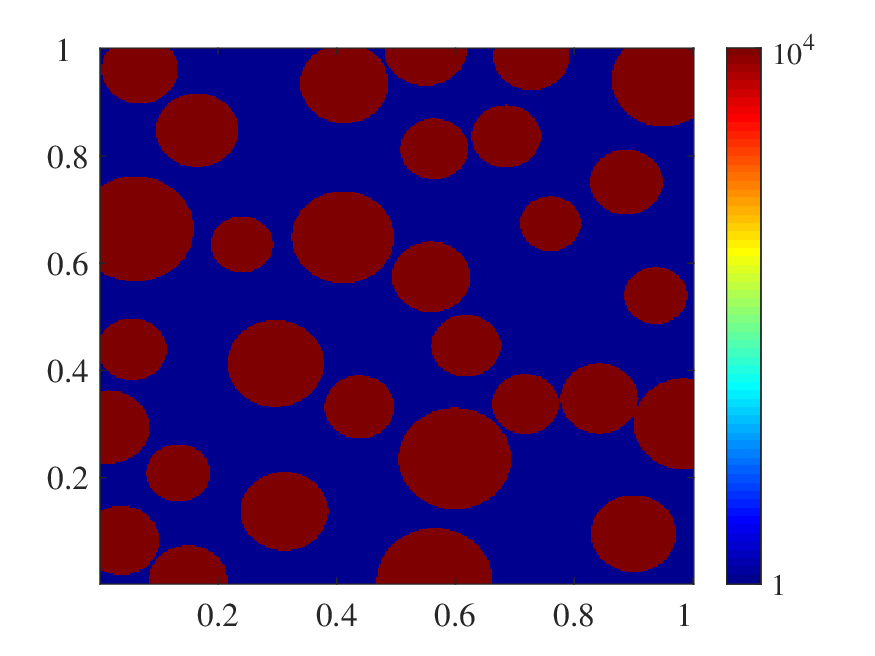}}
	\subfigure[N = 40]{
		\label{exam2_no_coef4}
		\includegraphics[width=2in, height=1.9in]{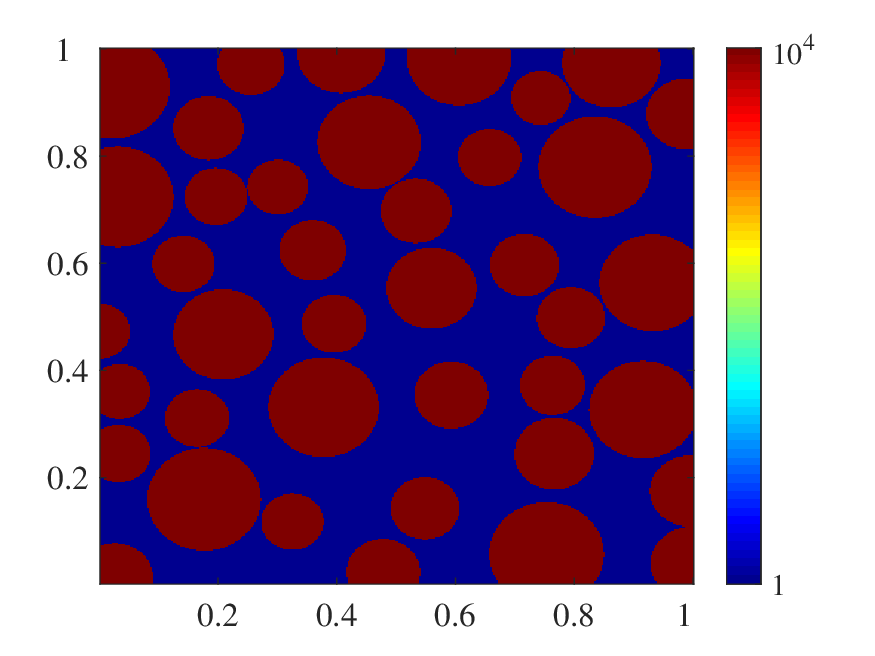}}
	\caption{Map of fields $\kappa(x)$ with various numbers of non-overlapping inclusions $N$.}
	\label{exam2_no_coef}
\end{figure}

\begin{table}[]
	\centering
	\caption{Comparison ($\varepsilon_{L_{2}}$ and $\varepsilon_{a}$) of various numbers of non-overlapping inclusions $N$.}
	\begin{tabular}{|cc|c|c|c|c|}
		\hline
		\multicolumn{2}{|c|}{N}                         & \multicolumn{1}{l|}{10} & \multicolumn{1}{l|}{20} & \multicolumn{1}{l|}{30} & \multicolumn{1}{l|}{40} \\ \hline
		\multicolumn{1}{|c|}{\multirow{2}{*}{a=3}} & $\varepsilon_{L_{2}}$ & 1.9819e-3               & 3.3391e-3               & 4.8637e-3               & 1.2888e-2               \\ \cline{2-6} 
		\multicolumn{1}{|c|}{}                     & $\varepsilon_{a}$ & 2.4825e-2               & 3.6259e-2               & 4.1764e-2               & 7.2074e-2               \\ \hline
		\multicolumn{1}{|c|}{\multirow{2}{*}{a=6}} & $\varepsilon_{L_{2}}$ & 5.9002e-4               & 9.1005e-4               & 1.6291e-3               & 3.0949e-3               \\ \cline{2-6} 
		\multicolumn{1}{|c|}{}                     & $\varepsilon_{a}$ & 1.0466e-2               & 1.4410e-2               & 2.0573e-2               & 3.0284e-2               \\ \hline
	\end{tabular}
	\label{exam2_no_numbers}
\end{table}
\begin{figure}[htpb]
	\subfigure[ ]{
		\begin{minipage}[t]{0.3\linewidth}
			\centering
			\includegraphics[width=2in, height=1.9in]{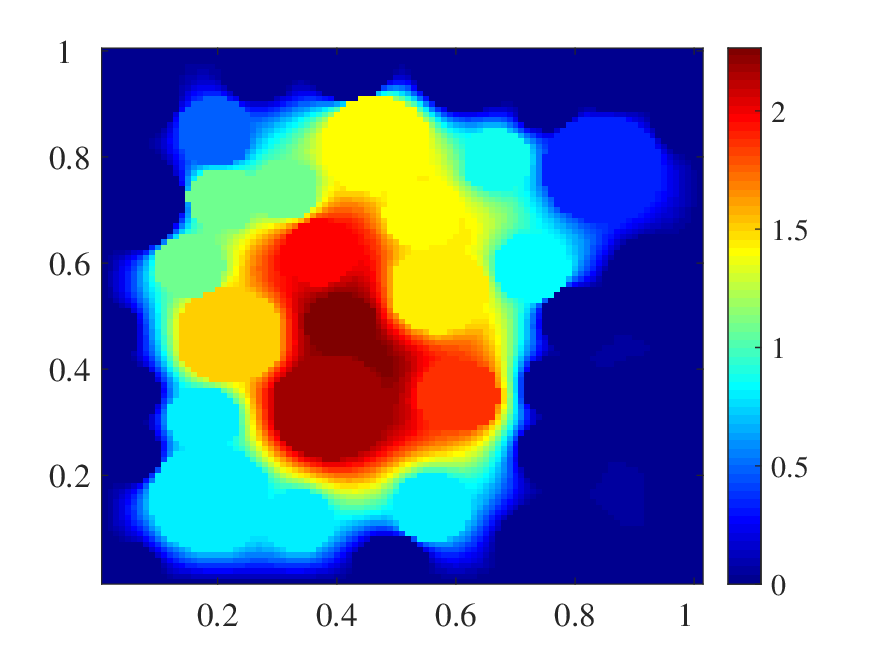}
		\end{minipage}
	}
	\subfigure[ ]{
		\begin{minipage}[t]{0.3\linewidth}
			\centering
			\includegraphics[width=2in, height=1.9in]{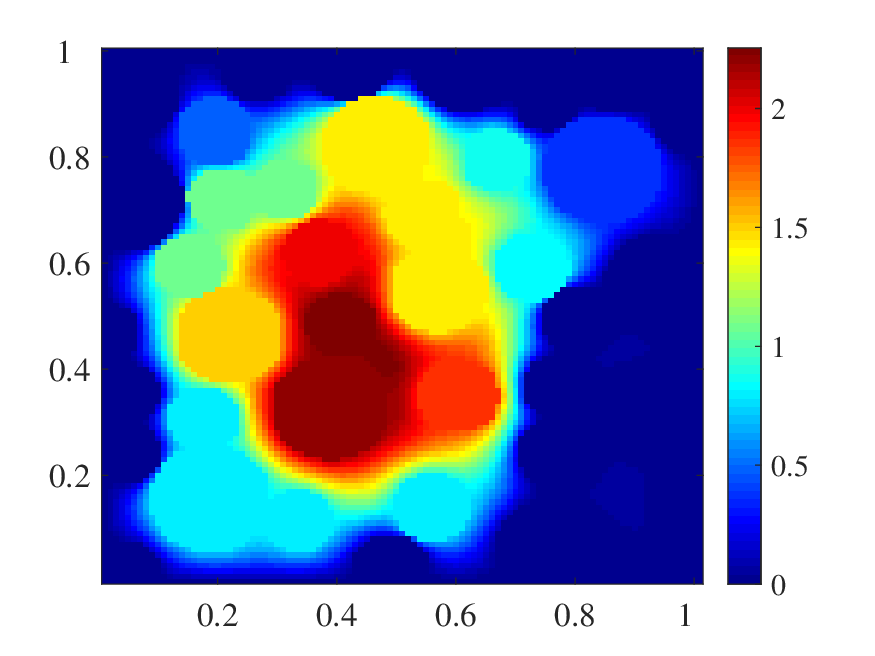}
		\end{minipage}
	}
	\subfigure[ ]{
		\begin{minipage}[t]{0.3\linewidth}
			\centering
			\includegraphics[width=2in, height=1.9in]{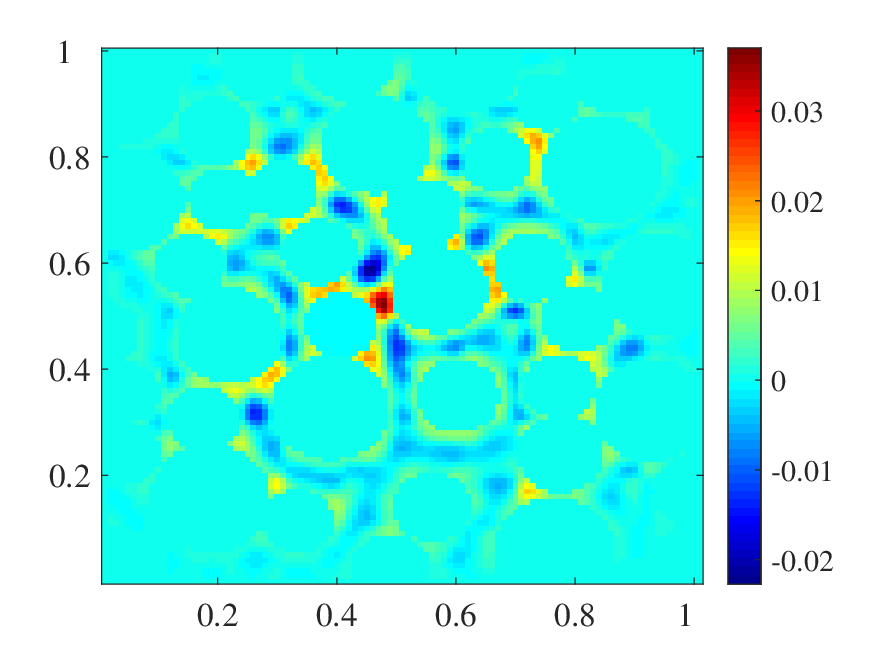}
		\end{minipage}
	}
	\caption{(a) the mean of reference solution at $T=0.1$ (b) the mean of CEM-GMsFEM solution at $T=0.1$ (c) the pointwise error between (a) and (b), where $\kappa(x)$ is taken from Fig. \ref{exam2_no_coef}(d) and N = 40, M = 6.}
	\label{example2_no_solutions}
\end{figure}

\begin{figure}[htbp]
	\centering  
	\subfigure[N =10]{
		\label{exam2_over_coef1}
    \begin{minipage}[t]{0.3\linewidth}
      \centering
		\includegraphics[width=2in, height=1.9in]{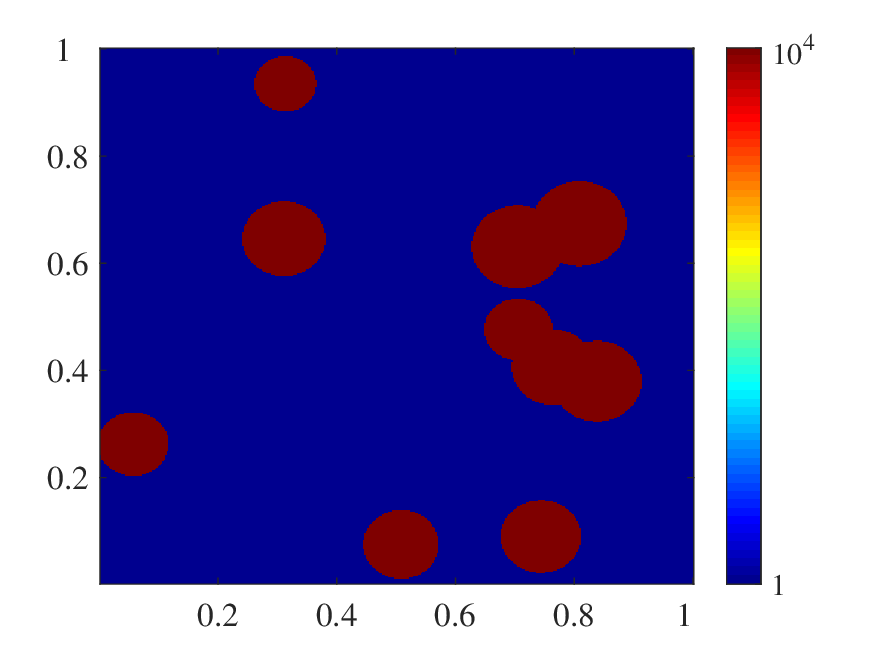}
  \end{minipage}}
	\subfigure[N = 20]{
		\label{exam2_over_coef2}
    \begin{minipage}[t]{0.3\linewidth}
      \centering
		\includegraphics[width=2in, height=1.9in]{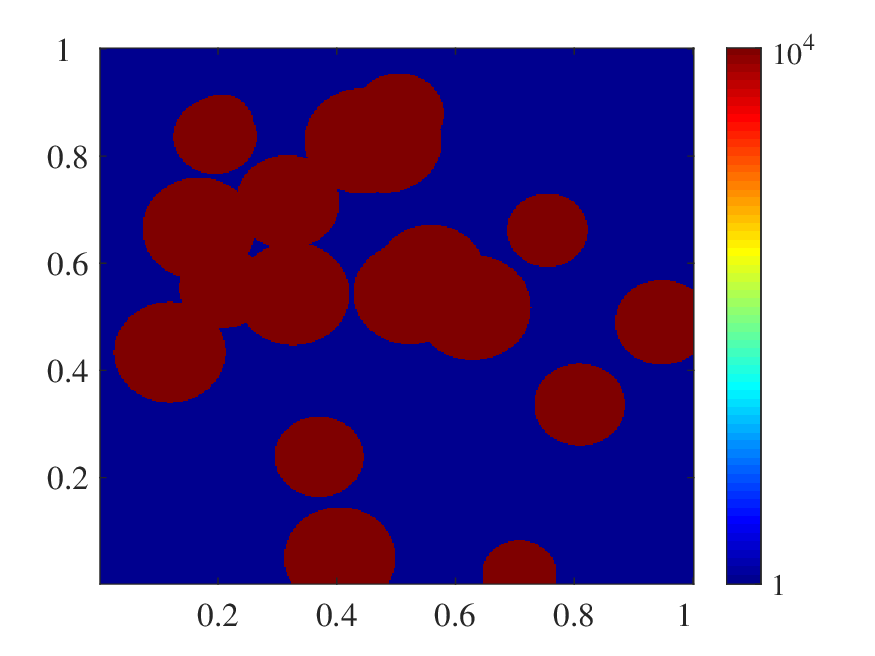}
  \end{minipage}}
	\subfigure[N =30]{
		\label{exam2_over_coef3}
    \begin{minipage}[t]{0.3\linewidth}
      \centering
		\includegraphics[width=2in, height=1.9in]{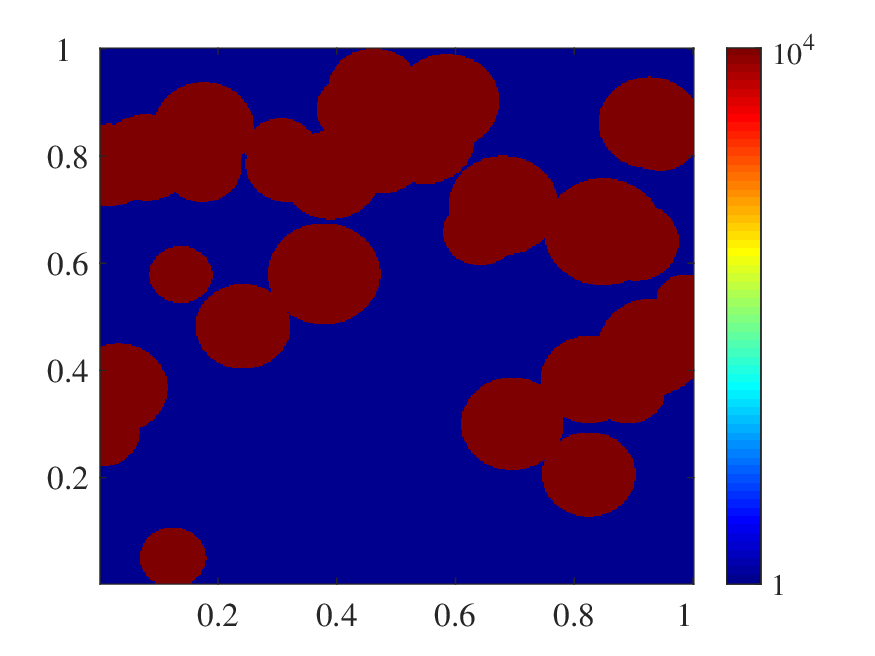}
  \end{minipage}}
  
	\subfigure[N = 40]{
		\label{exam2_over_coef4}
    \begin{minipage}[t]{0.3\linewidth}
      \centering
		\includegraphics[width=2in, height=1.9in]{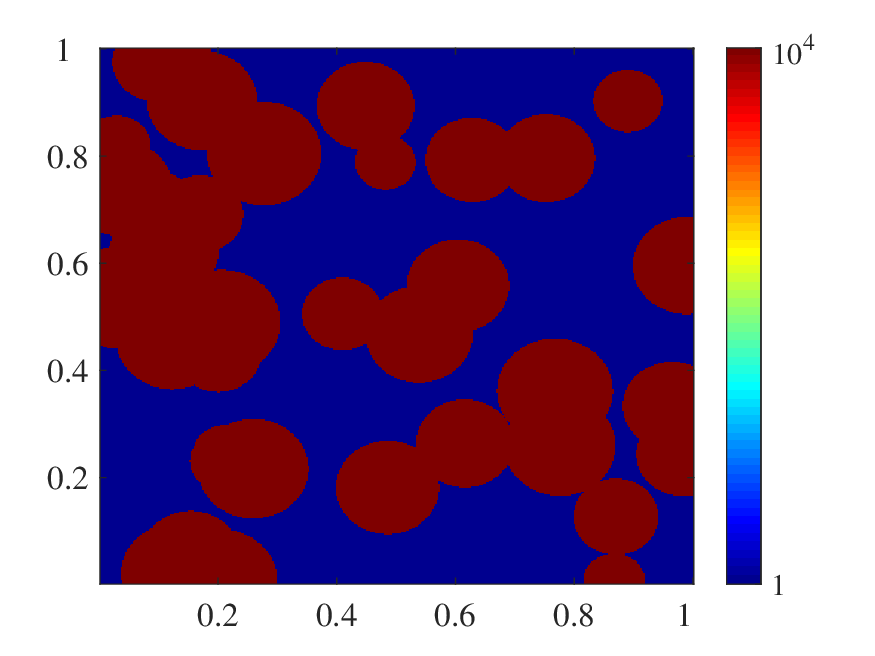}
  \end{minipage}}
	\subfigure[N = 60]{
		\label{exam2_over_coef5}
    \begin{minipage}[t]{0.3\linewidth}
      \centering
		\includegraphics[width=2in, height=1.9in]{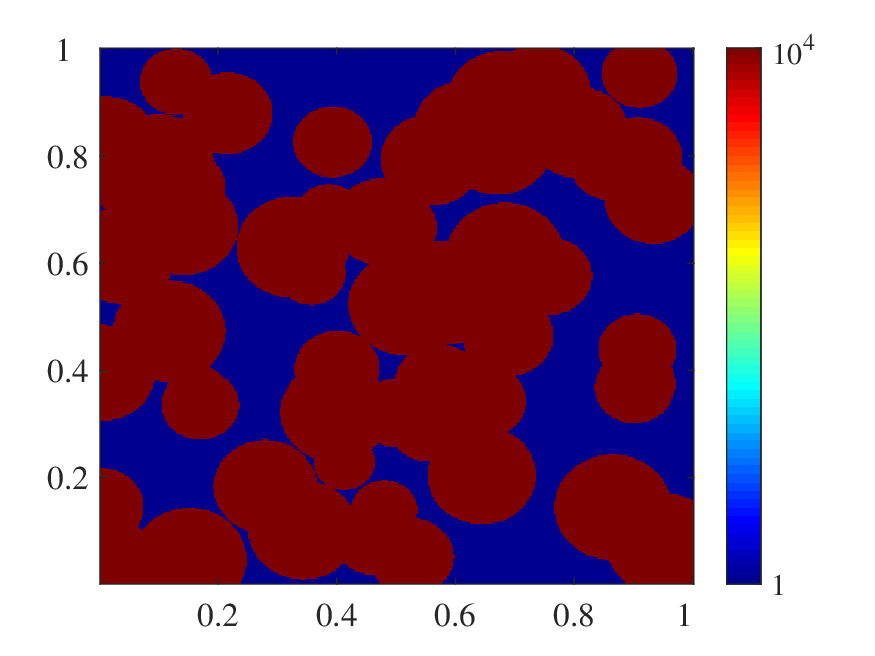}
  \end{minipage}}
	\subfigure[N = 80]{
		\label{exam2_over_coef6}
  \begin{minipage}[t]{0.3\linewidth}
    \centering
		\includegraphics[width=2in, height=1.9in]{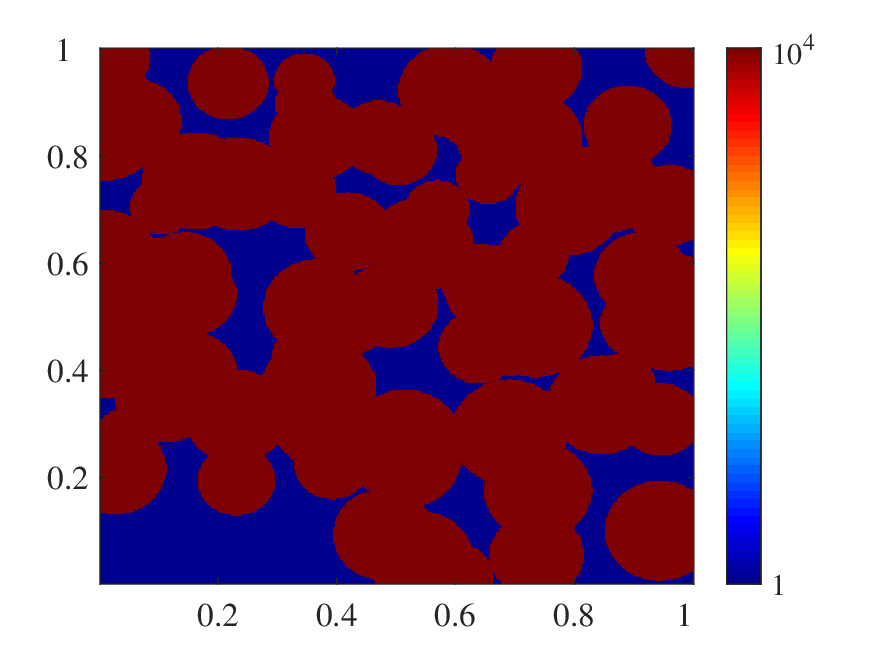}
  \end{minipage}}
	\caption{Map of field $\kappa(x)$ with various numbers of overlapping inclusions $N$.}
	\label{exam2_over_coef}
\end{figure}

\begin{table}[htbp]
	\caption{Comparison ($\varepsilon_{L_{2}}$ and $\varepsilon_{a}$) of various numbers of overlapping inclusions $N$.}
	\centering
	\begin{tabular}{|cc|c|c|c|c|l|l|}
		\hline
		\multicolumn{2}{|c|}{N}                         & \multicolumn{1}{l|}{10} & \multicolumn{1}{l|}{20} & \multicolumn{1}{l|}{30} & \multicolumn{1}{l|}{40} & 60        & 80        \\ \hline
		\multicolumn{1}{|c|}{\multirow{2}{*}{a=3}} & $\varepsilon_{L_{2}}$ & 1.5560e-3               & 3.1698e-3               & 7.8685e-3               & 8.2061e-3               & 7.0573e-2 & 5.8101e-1 \\ \cline{2-8} 
		\multicolumn{1}{|c|}{}                     & $\varepsilon_{a}$ & 1.9227e-2               & 3.2080e-2               & 5.1964e-2               & 5.3377e-2               & 2.1162e-1 & 7.1217e-1 \\ \hline
		\multicolumn{1}{|c|}{\multirow{2}{*}{a=6}} & $\varepsilon_{L_{2}}$ & 4.6618e-4               & 9.2879e-4               & 2.3236e-3               & 3.0367e-3               & 2.2186e-2 & 1.1618e-1 \\ \cline{2-8} 
		\multicolumn{1}{|c|}{}                     & $\varepsilon_{a}$ & 9.0417e-3               & 1.4037e-2               & 2.4599e-2               & 2.8893e-2               & 1.0796e-1 & 2.5358e-1 \\ \hline
	\end{tabular}
	\label{exam2_over_numbers}
\end{table}

Tabs. \ref{exam2_no_numbers} and \ref{exam2_over_numbers} show comparisons of the relative errors $\varepsilon_{L_{2}}$ and $\varepsilon_{a}$ of various numbers of overlapping or non-overlapping inclusions $N$. The number of basis functions $a$ in each coarse region is taken as 3 and 6. Consistent with the previous statement, more degrees of freedom lead to more accurate results. It is worth noting that accuracy drops significantly as the number of inclusions increases. This is because the higher the number of inclusions, the more complex the geometry of the coefficient $\kappa(x)$. This leads to a slower decay of the eigenvalue inverse of the corresponding eigenvalue problem. Combined with the previous numerical experiments, the errors of our presented method are independent of the contrast of the coefficients when the number of basis functions is  appropriately chosen, but are highly dependent on the geometry of the coefficients. How the geometry of the coefficients affects errors is beyond the scope of this paper, some simple conclusions can be found in \cite{efendiev2011multiscale}. For geometrically complex coefficients, we need more degrees of freedom to improve accuracy, as shown in the right side of Fig. \ref{example1-coef-basis}.
\begin{figure}[htpb]
	\subfigure[ ]{
		\begin{minipage}[t]{0.3\linewidth}
			\centering
			\includegraphics[width=2in, height=1.9in]{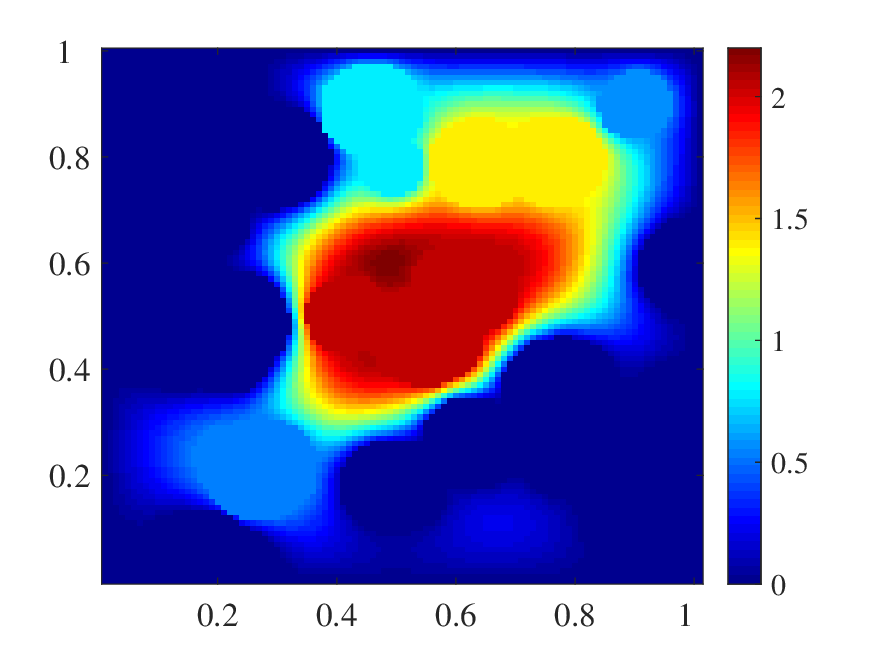}
		\end{minipage}
	}
	\subfigure[ ]{
		\begin{minipage}[t]{0.3\linewidth}
			\centering
			\includegraphics[width=2in, height=1.9in]{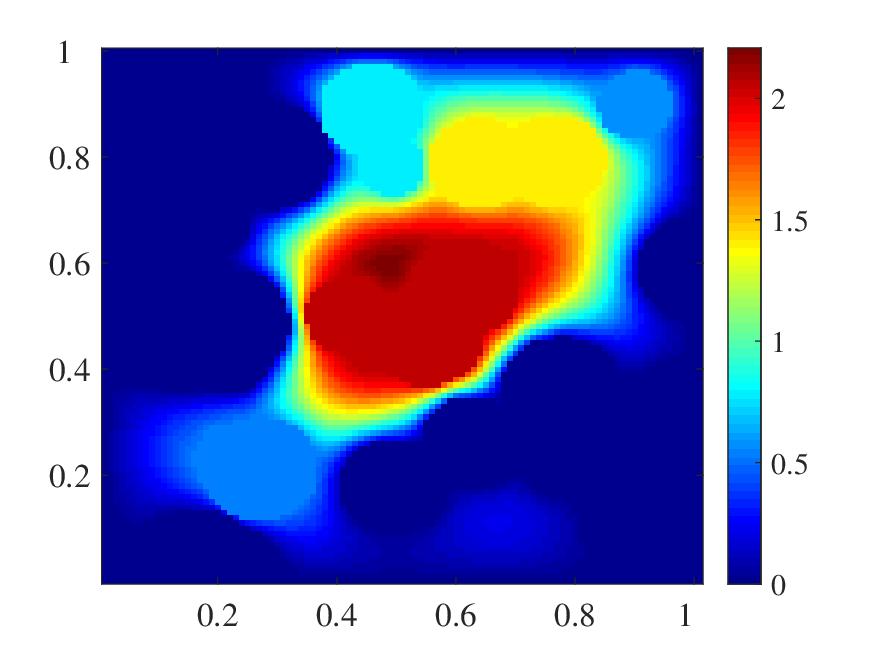}
		\end{minipage}
	}
	\subfigure[ ]{
		\begin{minipage}[t]{0.3\linewidth}
			\centering
			\includegraphics[width=2in, height=1.9in]{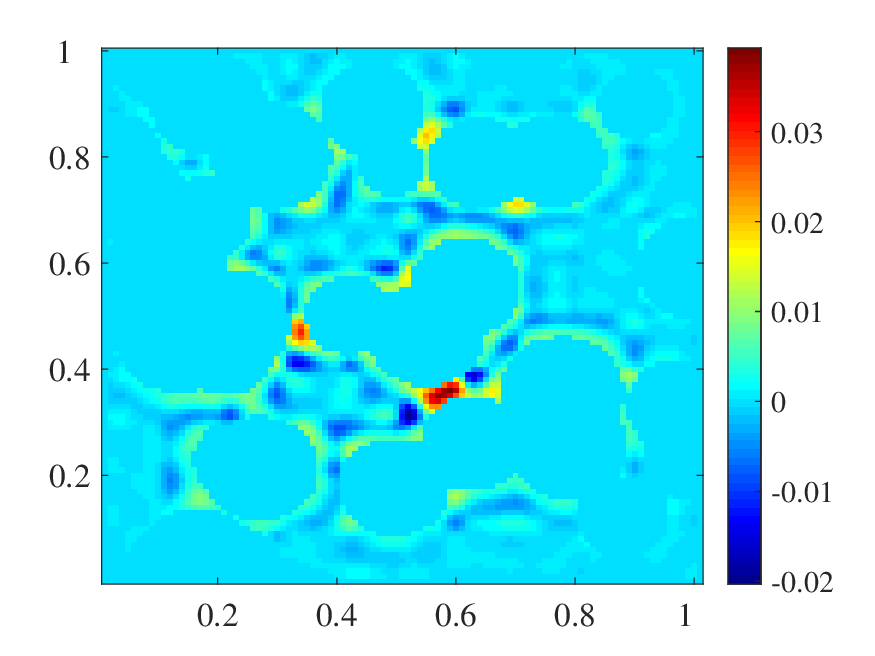}
		\end{minipage}
	}
	\caption{(a) the mean of reference solution at $T=0.1$ (b) the mean of CEM-GMsFEM solution at $T=0.1$ (c) the pointwise error between (a) and (b), where where $\kappa(x)$ is taken from Fig. \ref{example2_over_solutions}(d) and N = 40, M = 6.}
	\label{example2_over_solutions}  
\end{figure}

To be analogous with Fig. \ref{example1-solutions} in subsection \ref{Num-examp1}, Figs. \ref{example2_no_solutions} and \ref{example2_over_solutions} show the mean of the reference solution and the mean of the multiscale solution at $T = 0.1$ and the pointwise error between them, respectively. From these two figures, we observe that the CEM-GMsFEM solution mean is in excellent agreement with the reference solution mean. This illustrates the accuracy and effectiveness of our proposed methods.
\section{Conclusions}\label{Conclusions}
\qquad In this paper, a new semi-implicit stochastic multiscale method for the radiative heat transfer problems in composite materials is presented. The development of the proposed method requires temporal discrete numerical schemes, the discrete representation of infinite-dimensional noise and multiscale basis construction. For each fixed time step, we employ a semi-implicit numerical scheme to linearize the resulting equations and apply two-step predictor-corrector methods to estimate strong nonlinearity terms induced by heat radiation more accurately. The representation of infinite-dimension additive noises as abstract forms is presented to reduce the dimension of the random space, and Monte Carlo sampling is used for probability space. Our coarse space consists of CEM-GMsFEM basis functions. Through the convergence analysis, it is concluded that our proposed methods not only retain the advantages of CEM-GMsFEM, which is linearly dependent on coarse grids and independent of high contrast coefficients, but can also be used to characterize the influence of noise fluctuation on the solutions. To illustrate the applicability of the proposed method, we presented numerical results for two test problems. Numerical results showed that our proposed method has good universality. It is not limited to periodic microstructure and can accurately calculate the nonlinear radiative heat transfer problems of various multiscale structures. This proposed method will be generalized for the simulation of time-dependent nonlinear stochastic multiscale problems.
\begin{acknowledgements}
  The authors gratefully appreciate the valuable comments from the reviewers, which have contributed significantly to the improvement of this manuscript. This research was supported by National Natural Science Foundation of China (11301392), and Fundamental Research Funds for Central Universities.
  
  \end{acknowledgements}

  \begin{dataavailabilitys}
    Datasets produced or examined in the present study can be obtained from the corresponding author upon reasonable request.
    \end{dataavailabilitys}


%
\section*{Conflict of interest}
The authors declare that they have no conflict of interest.

\bibliographystyle{spmpsci}      
\bibliography{heatradiation.bib}  

%
%

\end{document}